\newcommand{\iid}{\emph{i.i.d.}}
\newcommand{\A}{{\mathcal A}}
\newcommand{\B}{{\mathcal B}}
\renewcommand{\O}{{\mathcal O}}
\renewcommand{\bar}[1]{{\overline{#1}}}
\newcommand{\F}{{\mathcal F}}
\renewcommand{\P}{{\mathbb P}}
\newcommand{\R}{\mathbb{R}}
\newcommand{\N}{\mathbb{N}}
\newcommand{\E}{\mathbb{E}}
\newcommand{\M}{\mathbb{M}}
\newcommand{\Q}{\mathbb{Q}}
\newcommand{\Z}{\mathbb{Z}}
\newcommand{\vb}[1]{\mathbf{#1}}
\newcommand{\eps}{\varepsilon}
\renewcommand{\hat}[1]{\widehat{#1}}
\renewcommand{\phi}{\varphi}
\newcommand{\limsupn}{\limsup_{n\to \infty}}
\newcommand{\liminfn}{\liminf_{n\to \infty}}
\newcommand{\limn}{\lim_{n\to \infty}}
\newcommand{\ppp}[1]{\Pi_{#1}}
\renewcommand{\subset}{\subseteq}
\renewcommand{\supset}{\supseteq}
\newtheorem{theorem}{Theorem}
\newtheorem*{theorem*}{Theorem}
\theoremstyle{plain}
\newtheorem{lemma}{Lemma}
\numberwithin{equation}{section}
\title{A direct verification argument for the Hamilton-Jacobi equation continuum limit of nondominated sorting\thanks{The research described in this paper was partially supported by NSF grant DMS-1500829.}}
\author{Jeff Calder\thanks{Department of Mathematics, University of California, Berkeley. ({\tt jcalder@berkeley.edu})}}
\begin{document} 
\maketitle

\begin{abstract}
Nondominated sorting is a combinatorial algorithm that sorts points in Euclidean space into layers according to a partial order. It was recently shown that nondominated sorting of random points has a Hamilton-Jacobi equation continuum limit. The original proof, given in \cite{calder2014}, relies on a continuum variational problem. In this paper, we give a new proof using a direct verification argument that completely avoids the variational interpretation. We believe this proof is new in the homogenization literature, and may be generalized to apply to other stochastic homogenization problems for which there is no obvious underlying variational principle.
\end{abstract}

\section{Introduction}
\label{sec:intro}

Many problems in science and engineering require the sorting, or ordering, of large amounts of multivariate data. Since there is no canonical linear criterion for sorting data in dimensions greater than one, many different methods for sorting have been proposed to address various problems (see, e.g., \cite{barnett1976ordering,small1990survey,liu1999multivariate,deb2002}). Many of these algorithms abandon the idea of a linear ordering, and instead sort the data into layers according to some set of criteria.   

We consider here \emph{nondominated sorting}, which arranges a set of points in Euclidean space into layers by repeatedly removing the set of minimal elements. Let $\leqq$ denote the coordinatewise partial order on $\R^d$ defined by
\[x \leqq y \ \ \ \iff \ \ \ x_i \leq y_i \ \ \text{for all } i.\]
The \emph{first nondominated layer}, also called the \emph{first Pareto front} and denoted $\F_1$, is exactly the set of minimal elements of $S$ with respect to $\leqq$, and the deeper fronts are defined recursively as follows:
\[\F_k = \text{Minimal elements of } S\setminus \bigcup_{i<k} \F_i.\]
This peeling process eventually exhausts the entire set $S$, and the result is a partition of $S$ based on Pareto front index, which is often called \emph{Pareto depth} or \emph{rank}. Figure \ref{fig:demo} gives an illustration of nondominated sorting of a random set $S$.
\begin{figure}
\centering
\subfigure[\emph{i.i.d.}~samples]{\includegraphics[clip = true, trim = 30 20 30 20, height=0.24\textwidth]{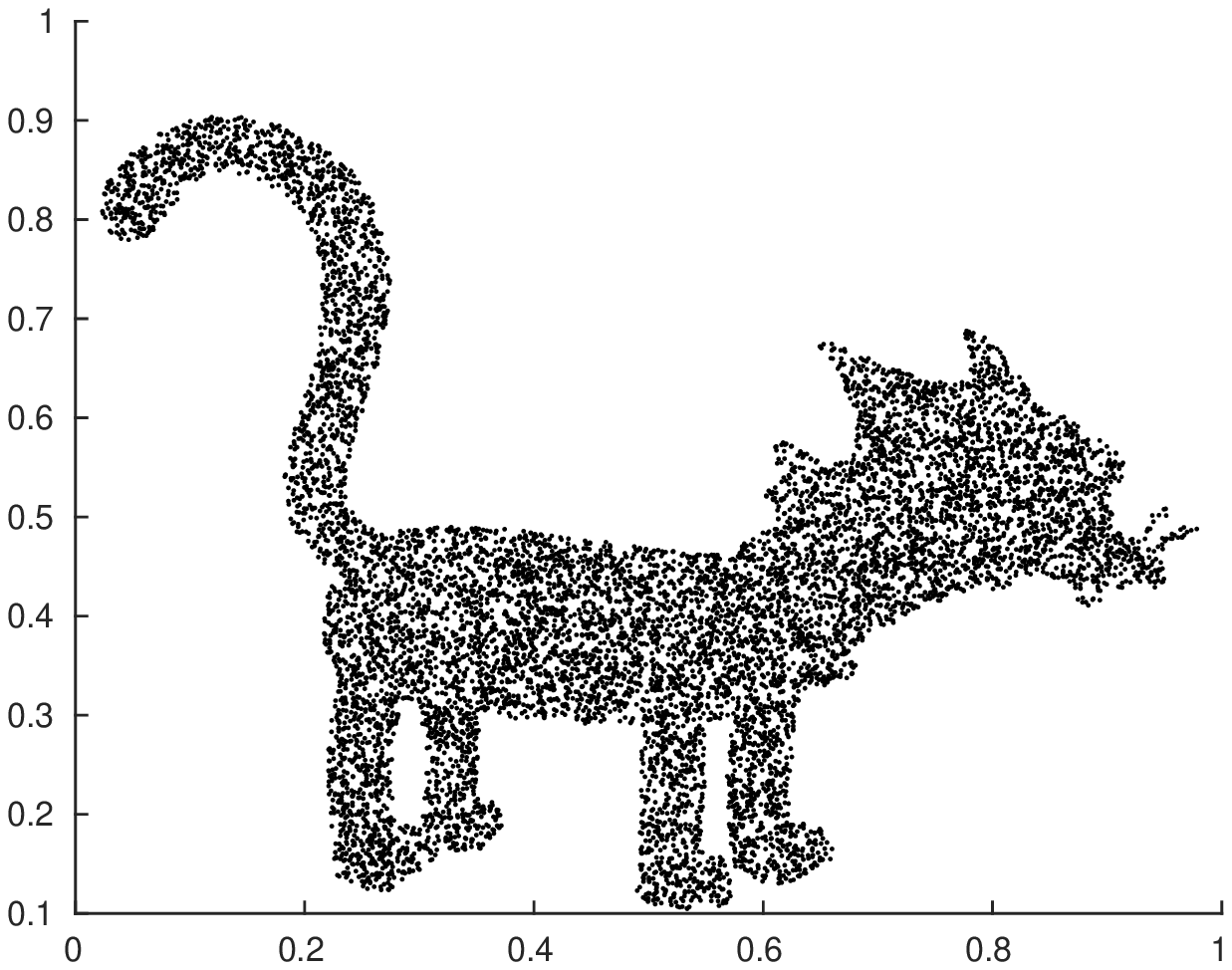}\label{fig:example-fronts}}
\subfigure[$n=10^4$ points]{\includegraphics[clip = true, trim = 30 20 30 20, height=0.24\textwidth]{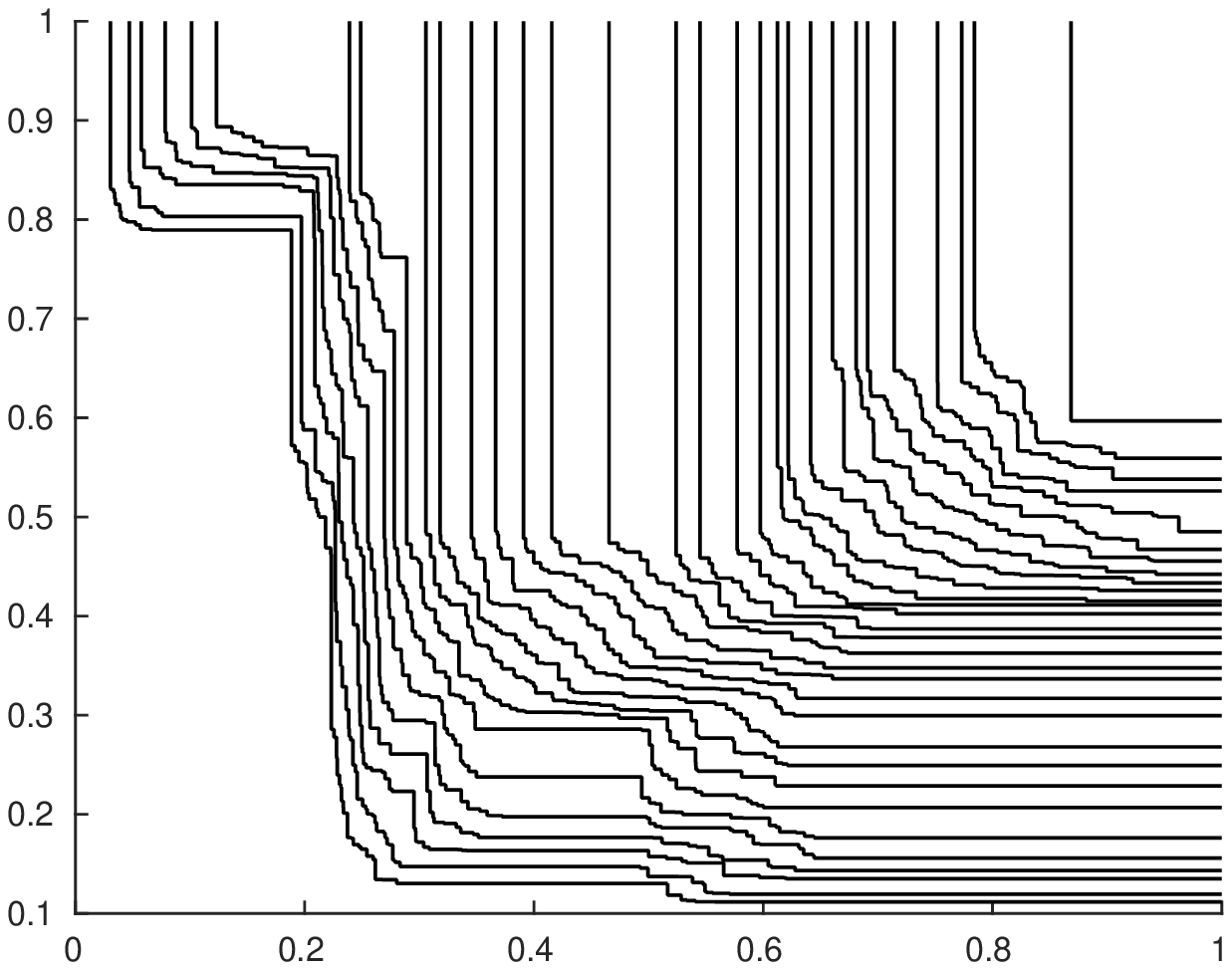}\label{fig:example-fronts-10^4}}
\subfigure[$n=10^6$ points]{\includegraphics[clip = true, trim = 30 20 30 20, height=0.24\textwidth]{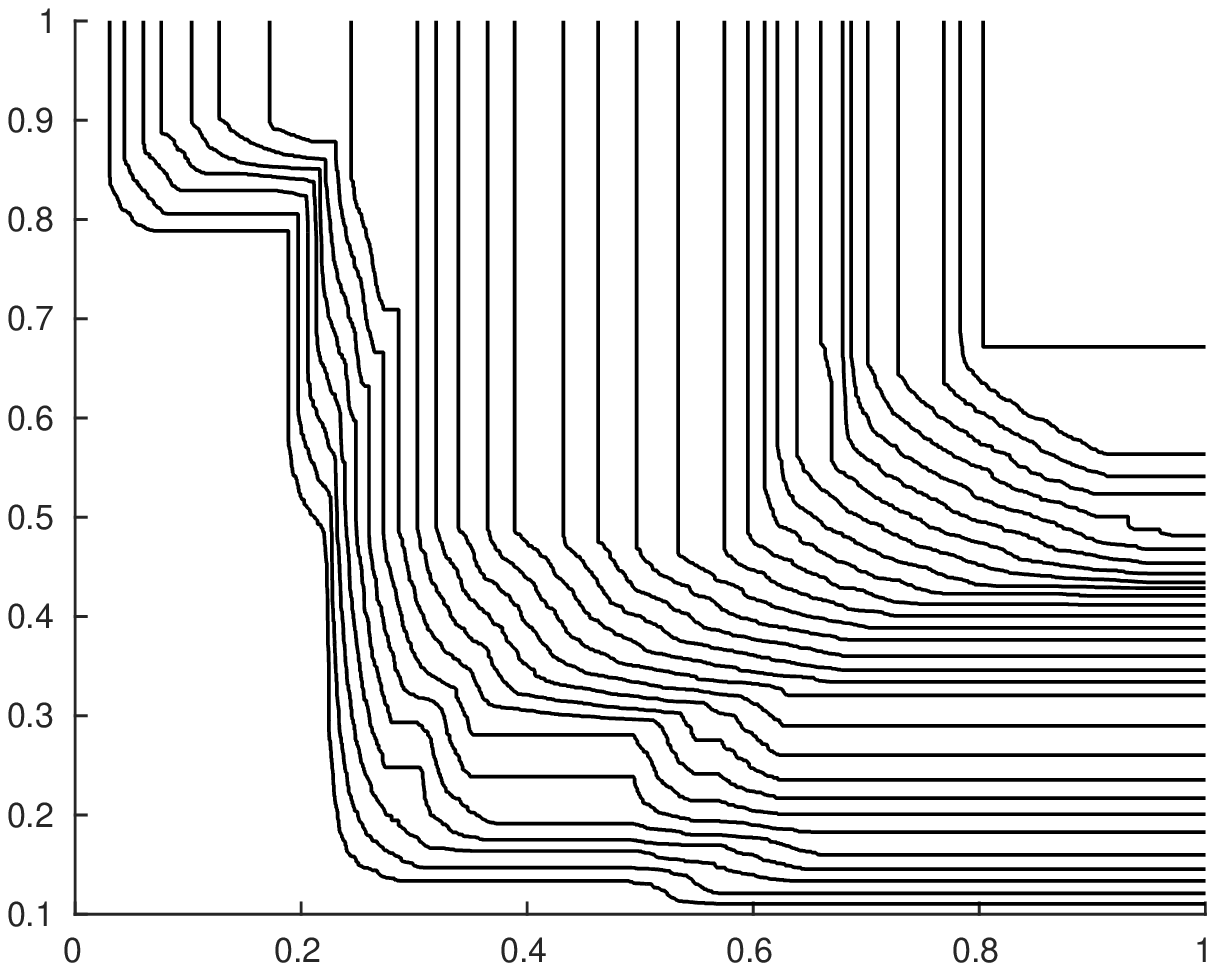}\label{fig:example-fronts-10^6}}
\caption{Examples of Pareto fronts corresponding to \emph{i.i.d.}~random variables $X_1,\dots X_n$ drawn from the distribution depicted in (a). In (b) and (c), we show 30 evenly spaced Pareto fronts for $n=10^4$ and $n=10^6$, respectively.}
\label{fig:demo}
\end{figure}

Nondominated sorting is widely used in multi-objective optimization, where it is the basis of the popular and effective genetic and evolutionary algorithms~\cite{deb2002,fonseca1993,fonseca1995,deb2001,srinivas1994}. Of course, multi-objective optimization is ubiquitous in engineering and scientific contexts, such as control theory and path planning~\cite{mitchell2003,kumar2010,madavan2002}, gene selection and ranking~\cite{speed2003,hero2002,hero2003,hero2004,fleury2002,fleury2004a,fleury2004b}, data clustering~\cite{handl2005}, database systems~\cite{kossmann2002,papadias2005,hsiao2015}, image processing and computer vision~\cite{mumford1989,chan2001}, and some recent machine learning problems~\cite{hsiao2015,hsiao2012,hsiao2015b}.

Nondominated sorting is also equivalent to the longest chain problem, which has a long history in probability and combinatorics~\cite{ulam1961,hammersley1972,bollobas1988,deuschel1995}. A \emph{chain} in $\R^d$ is a finite sequence of points that is totally ordered with respect to $\leqq$. Let $X_1,\dots,X_n$ be $n$ distinct points in $\R^d$ and define 
\begin{equation}\label{eq:unorig}
U_n(x) = \ell(\{X_1,\dots,X_n\} \cap [0,x]),
\end{equation}
where $\ell(\O)$ denotes the length of a longest chain in the finite set $\O \subset \R^d$. 
The notation $[0,x]$ is a special case of the more general interval notation
\[ [x,z] = \big\{y \in \R^d \, : x \leqq y \leqq z\big\} = \prod_{i=1}^d [x_i,z_i]\]
that we shall use throughout the paper. Let $\F_1,\F_2,\F_3,\dots$ denote the Pareto fronts obtained by applying nondominated sorting to $S:=\{X_1,\dots,X_n\}$. Then $x \in \F_1$ if and only if there are no other points $y \in S$ with $y \leqq x$, i.e., $U_n(x) = 1$. A point $x\in S$ is on the second Pareto front $\F_2$ if and only if all points $y \in S$ satisfying $y \leqq x$ are on the first front, and one such point exists. For any such $y \in \F_1$, $\{y,x\}$ is a chain of length $\ell=2$ in $S\cap [0,x]$, and we see that $x \in \F_2 \iff U_n(x) = 2$. Peeling off successive Pareto fronts and repeating this argument yields
\[x \in \F_k \ \ \iff \ \ U_n(x) = k.\]
Hence, the Pareto fronts $\F_1,\F_2,\F_3,\dots$ are embedded into the level sets (or jump sets) of the longest chain function $U_n$, as depicted in Figure \ref{fig:demo}.

In \cite{calder2014}, we proved the following continuum limit for nondominated sorting.
\begin{theorem}\label{thm:prev}
Let $X_1,X_2,X_3,\dots$ be a sequence of i.i.d.~random variables in $\R^d$ with density $f$, and define $U_n$ by \eqref{eq:unorig}. Suppose there exists an open and bounded set $\O \subset \R^d_+$ with Lipschitz boundary such that $f$ is uniformly continuous on $\O$, and $f(x)=0$ for $x\not\in \O$.  Then $n^{-\frac{1}{d}}U_n \longrightarrow u$ uniformly on $[0,\infty)^d$ with probability one, where $u$ is the unique nondecreasing\footnote{We say $u$ is nondecreasing if $x_i \mapsto u(x)$ is nondecreasing for all $i$.} viscosity solution of the Hamilton-Jacobi equation
\[\text{\emph{(P)}} \ \ \left\{ \begin{aligned}
u_{x_1} \cdots u_{x_d} &= (c_d/d)^df& &\text{in } \R^d_+, \\
u&=0& &\text{on } \partial \R^d_+.\end{aligned}\right.\]
and $c_d>0$ are the universal constants given in \cite{bollobas1988}.
\end{theorem}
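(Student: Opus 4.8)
Write $v_n:=n^{-1/d}U_n$. I would establish the theorem by a half-relaxed-limits verification. Define
\[
 \bar u(x):=\limsup_{\substack{n\to\infty\\ y\to x}} v_n(y),\qquad \underline u(x):=\liminf_{\substack{n\to\infty\\ y\to x}} v_n(y),
\]
show that $\underline u$ is a viscosity supersolution of (P) and that $\bar u\le u$, and conclude from the comparison principle for (P) that $u\le\underline u\le\bar u\le u$, so $v_n\to u$. The sole probabilistic ingredient is a single almost sure event on which, for every closed nondegenerate box $B=[a,b]\subset\R^d_+$ with rational corners,
\[
 c_d\Big(\inf_B f\Big)^{1/d}|B|^{1/d}\ \le\ \liminf_{n\to\infty} n^{-1/d}\ell(\{X_1,\dots,X_n\}\cap B)\ \le\ \limsup_{n\to\infty} n^{-1/d}\ell(\{X_1,\dots,X_n\}\cap B)\ \le\ c_d\Big(\sup_B f\Big)^{1/d}|B|^{1/d};
\]
these come from the Bollob\'as--Winkler asymptotics on $[0,1]^d$ by Poissonization and comparison of $\{X_1,\dots,X_n\}\cap B$ with uniform samples of the right intensity, made simultaneous over the countable family of rational boxes by Borel--Cantelli together with exponential concentration of $\ell$ around its mean (e.g.\ Talagrand's inequality). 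Since $v_n$ is nondecreasing and---by tiling $[0,x]$ with small cubes and summing the box upper bound---locally uniformly bounded, $\bar u$ and $\underline u$ are well defined, finite, upper/lower semicontinuous, nondecreasing, satisfy $\underline u\le\bar u$, and vanish on $\partial\R^d_+$.

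\textbf{Supersolution inequality (easy half).} The key combinatorial fact is \emph{superadditivity}: for $y\leqq z$, concatenating a longest chain in $\{X_i\}\cap[0,y]$ with one in $\{X_i\}\cap[y,z]$ gives $U_n(z)\ge U_n(y)+\ell(\{X_1,\dots,X_n\}\cap[y,z])$. If $\phi\in C^1$ and $\underline u-\phi$ has a strict local minimum at $x_0\in\R^d_+$, the standard relaxed-limit lemma provides $n_k\to\infty$ and local minimizers $y_k\to x_0$ of $v_{n_k}-\phi$ with $v_{n_k}(y_k)\to\underline u(x_0)$; testing the minimum at $y_k-h$ for small $h\geqq 0$ and invoking superadditivity on $[y_k-h,y_k]$, the one-box lower bound, and the uniform continuity of $f$ yields in the limit
\[
 \nabla\phi(x_0)\cdot\theta\ \ge\ c_d\,f(x_0)^{1/d}(\theta_1\cdots\theta_d)^{1/d}\qquad\text{for all }\theta\geqq 0 .
\]
Minimizing the left side over $\{\theta\geqq 0:\theta_1\cdots\theta_d=1\}$ by AM--GM gives $\phi_{x_1}(x_0)\cdots\phi_{x_d}(x_0)\ge (c_d/d)^d f(x_0)$ (the case $f(x_0)=0$ being trivial), so $\underline u$ is a viscosity supersolution of (P). Comparison with the solution $u$ gives $u\le\underline u$.

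\textbf{Upper bound $\bar u\le u$ (the main obstacle).} The difficulty is that nondominated sorting has \emph{no subadditive counterpart} to the above superadditivity: a longest chain in $[0,z]$ is not controlled by longest chains in $[0,y]$ and in a ``collar'' $[0,z]\setminus[0,y]$, since those collars are too large. I would instead prove a global a priori bound. Trace the longest chain in $\{X_i\}\cap[0,x]$ by a monotone sequence of small boxes $B_1,\dots,B_M$ \emph{adapted to the chain's own path}, so that its points are correctly apportioned among the $B_t$; bound the contribution of each $B_t$ by the one-box upper estimate $c_d(1+\eps)(n\sup_{B_t}f)^{1/d}\prod_i(\mathrm{side}_i B_t)^{1/d}$; and sum, using $\prod_i(h_i)^{1/d}=(h_1\cdots h_d)^{1/d}$, to get $U_n(x)\le c_d(1+\eps)n^{1/d}\!\int_0^1(f(\gamma(r))\,\dot\gamma_1(r)\cdots\dot\gamma_d(r))^{1/d}\,dr+o(n^{1/d})$ for the chain's piecewise-linear monotone path $\gamma$ from $0$ to $x$. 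The verification step is then the inequality
\[
 c_d\int_0^1\big(f(\gamma(r))\,\dot\gamma_1(r)\cdots\dot\gamma_d(r)\big)^{1/d}\,dr\ \le\ u(x),
\]
valid for \emph{every} monotone path $\gamma$ from $0$ to $x$: it follows directly from $u$ being a Lipschitz supersolution of (P), by writing $u(x)=u(x)-u(0)=\int_0^1\nabla u(\gamma(r))\cdot\dot\gamma(r)\,dr$ and applying AM--GM pointwise together with the supersolution inequality $u_{x_1}\cdots u_{x_d}\ge (c_d/d)^d f$ (valid a.e.)---no variational representation of $u$ is used. Together these give $v_n(x)\le(1+\eps)u(x)+o(1)$ and hence $\bar u\le u$. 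I expect the technical heart to be: (i) organizing the chain into a genuinely monotone box-sequence and controlling the discretization; (ii) choosing a box scale $s_n$ with $n^{-\alpha}\le s_n\le n^{-\beta}$, $0<\beta<\alpha<1/d$, so the one-box estimates hold uniformly over all such boxes on the almost sure event while the error remains $o(n^{1/d})$; (iii) absorbing the spatial variation of $f$ via its modulus of continuity; and (iv) justifying the chain rule for the Lipschitz $u$ along $\gamma$ (e.g.\ by mollifying $u$ into approximate supersolutions).

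\textbf{Conclusion.} The sandwich $u\le\underline u\le\bar u\le u$ forces $\bar u=\underline u=u$ and hence locally uniform convergence $v_n\to u$. As $U_n$ and $u$ are nondecreasing and stabilize at their maximal value once $x$ dominates $\mathrm{supp}\,f$, and both are small near $\partial\R^d_+$, the convergence is in fact uniform on all of $[0,\infty)^d$. Everything takes place on the single almost sure event carrying the one-box estimates, giving the ``with probability one'' statement.
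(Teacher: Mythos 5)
Your plan is correct in outline, but it follows a genuinely different route from the paper — and, notably, one that does not achieve the paper's stated goal of \emph{avoiding} the variational interpretation. The supersolution half is the same in spirit: both arguments test at a touching point, use a lower bound on the longest chain in a small region, and extract the constant $(c_d/d)^d$ (you via AM--GM over box aspect ratios, the paper via its Lemma \ref{lem:gen-simplex} computation for simplices $S_{x,v}$, which directly produces the $1/d$). The decisive divergence is in the upper bound. You propose to trace the random chain with a monotone sequence of small boxes, sum one-box estimates, and land on the quantity $c_d\int_0^1 (f(\gamma)\,\dot\gamma_1\cdots\dot\gamma_d)^{1/d}\,dr$, then close with a verification lemma deduced from the PDE. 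This is precisely the variational argument in disguise: the chain-to-path decomposition and the path functional $c_d\int (f\,\dot\gamma_1\cdots\dot\gamma_d)^{1/d}$ \emph{are} the variational formula \eqref{eq:var}, and proving the per-path inequality from the a.e.\ supersolution property of $u$ is the standard verification-theorem direction of the Hopf--Lax/control representation. The paper's Step 2 does something structurally different and strictly local: if $V_k-\phi$ is maximized near $x_0$, then the \emph{terminal segment} of length $\approx\eps n_k^{1/d}$ of any longest chain to $x_0$ must lie in the set $\A_{k,\eps}=\{z\leqq x_0: V_k(z)\geq V_k(x_0)-\eps\}$, which is then sandwiched inside the $\phi$-defined set $A_{\eps+\eps^2}(x_0)$ of Theorem \ref{thm:con}; the consistency estimate $\limsup_n n^{-1/d}\ell(\ppp{nf}\cap A_{\eps+\eps^2}(x_0))\lesssim \eps\,(c_d/d)(f^*/\phi_{x_1}\cdots\phi_{x_d})^{1/d}$ then yields the subsolution inequality with \emph{no decomposition of the chain into paths whatsoever}. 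This is the analogue of an upwind monotone scheme and is what makes the argument a ``direct verification'' in the paper's sense.

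Beyond the philosophical point, two technical concerns about your upper bound are worth flagging. First, the chain rule $u(x)=\int_0^1\nabla u(\gamma)\cdot\dot\gamma\,dr$ together with the pointwise inequality $u_{x_1}\cdots u_{x_d}\geq (c_d/d)^d f$ a.e.\ is not automatic: the null set where a Lipschitz $u$ fails to be differentiable could contain the 1-dimensional curve $\gamma$ entirely, and the Hamiltonian $p\mapsto p_1\cdots p_d$ is not convex on $\R^d$, so inf-convolutions do not immediately give smooth supersolutions; you would have to exploit concavity of $(p_1\cdots p_d)^{1/d}$ on $\{p\geqq 0\}$ (or some other device restricted to nondecreasing functions) to make the mollification step rigorous. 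Second, the box decomposition needs a scale $s_n$ chosen so that the number of boxes is $o(n^{1/d}/\text{error})$ while simultaneously the one-box estimates are uniform over all positions at that scale, and the boxes must be ordered coherently with the chain so you don't double-count; this is doable but is exactly the kind of explicit chain-bookkeeping the paper's local argument is designed to bypass, and it is the technical heart of the original variational proof in \cite{calder2014}. In summary: your argument is essentially the earlier variational proof reorganized so that the per-path inequality is deduced from the PDE rather than assumed, while the paper's contribution is to show that one can skip the path functional entirely by combining monotonicity of $\ell$, the Hölder stability estimate, and the local simplex consistency estimate in the Barles--Souganidis mold.
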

This result shows that nondominated sorting of massive datasets, which are common in big data applications~\cite{fortin2013generalizing,jensen2003,gilbert2007}, can be approximated by solving a partial differential equation (PDE). In \cite{calder2015PDE}, we exploited this idea to propose a fast approximate nondominated sorting algorithm based on estimating $f$ from the data $X_1,\dots,X_n$ and solving (P) numerically. 

The proof of Theorem \ref{thm:prev}, given in \cite{calder2014}, is based on the following continuum variational problem:
\begin{equation}\label{eq:var}
u(x) = c_d \,\sup \int_0^1 f(\gamma(t))^\frac{1}{d} (\gamma_1'(t) \cdots \gamma_d'(t))^\frac{1}{d} dt,
\end{equation}
where the supremum is over all $C^1$ curves $\gamma:[0,1] \to [0,\infty)^d$ that are monotone nondecreasing in all coordinates and satisfy $\gamma(1)=x$. This can be viewed as a type of stochastic homogenization of the longest chain problem, where the curves $\gamma$ are continuum versions of chains. The variational problem \eqref{eq:var} first appeared in two dimensions in \cite{deuschel1995}. The PDE (P) arises as the Hamilton-Jacobi-Bellman equation associated with \eqref{eq:var} \cite{bardi1997}. We used the same ideas to prove a similar Hamilton-Jacobi equation continuum limit for the directed last passage percolation model in statistical physics, which also has an obvious discrete variational formulation~\cite{calder2015directed}.

It is possible to view Theorem \ref{thm:prev} in the context of stochastic homogenization of Hamilton-Jacobi equations. Indeed, $U_n$ can be interpreted as a discontinuous viscosity solution of the Hamilton-Jacobi equation
\begin{equation}\label{eq:pp}
\left.\begin{aligned}
U_{n,x_1} \cdots U_{n,x_d} &= \sum_{i=1}^n \delta_{X_i}& &\text{in } \R^d_+, \\
U_n&=0& &\text{on } \partial \R^d_+.
\end{aligned}\right\}\end{equation}
Since the right hand side of \eqref{eq:pp} is highly singular, it is worth discussing in what sense we interpret $U_n$ to be a solution of \eqref{eq:pp}. The obvious approach is to mollify the right hand side to obtain a sequence $U_n^\eps$ of approximate solutions. It is possible so show that as $\eps\to 0$, the sequence $U_n^\eps$ converges pointwise to $CU_n$, where the constant $C>0$ depends on the mollification kernel used. Intriguing as this observation is, it is unclear whether \eqref{eq:pp} can be used directly to prove Theorem \ref{thm:prev}.

There is a growing literature on stochastic homogenization of Hamilton-Jacobi equations. Many of the proofs are based on applying subadditive ergodic theory to a representation formula for the solution, like the Hopf-Lax formula, or more generally, a control theoretic interpretation \cite{lions2005homogenization,kosygina2006stochastic,kosygina2008homogenization,souganidis1999,lions2003correctors,rezakhanlou2002continuum,seppalainen1998a,schwab2009stochastic}. These techniques are similar in spirit to our original proof of Theorem \ref{thm:prev} that was based on the variational representation of $u$ given in \eqref{eq:var}. Recently, there has been significant interest in stochastic homogenization techniques that do not require a variational interpretation \cite{lions2010stochastic,armstrong2013stochastic,armstrong2012stochastic,armstrong2015stochastic,rezakhanlou2001continuum}.

In this paper we give an alternative proof of Theorem \ref{thm:prev} that uses a direct verification argument, and completely avoids using the variational interpretation of (P).  The argument is based on a heuristic derivation of (P) that originally appeared in \cite{calder2014}. We review this argument, which is reminiscent of dimensional analysis, in Section \ref{sec:informal}. We believe this proof is new in the stochastic homogenization literature, and it seems to be more robust than arguments based on variational principles. We present the proof in the simplest setting in this paper, but we believe it can be substantially generalized. In future work, we plan to apply this proof technique to other stochastic growth models that do not stem from underlying variational principles. 

\section{Main result}
\label{sec:main}

To simplify the presentation, we model the data here using a Poisson point process. Given a nonnegative function $f \in L^1_{loc}(\R^d)$, we denote by $\ppp{f}$ a Poisson point process with intensity function $f$. This means that $\ppp{f}$ is a random at most countable subset of $\R^d$, and for every bounded Borel set $A \subset \R^d$, the cardinality of $\ppp{f}\cap A$, denoted $N(A)$, is a Poisson random variable with mean $\int_A f \, dx$. Furthermore, for disjoint $A$ and $B$, the random variables $N(A)$ and $N(B)$ are independent. It is worthwhile to mention that in the special case where $\int_{\R^d} f \, dx=1$, one way to construct $\ppp{nf}$ for any $n >0$ is by setting
\begin{equation}\label{eq:cons}
\ppp{nf}:=\{X_1,\dots,X_N\},
\end{equation}
where $X_1,X_2,X_3,\dots$ is an \iid~sequence of random variables with density $f$ and $N$ is a Poisson random variable with mean $n$.  For more details on Poisson point processes, we refer the reader to Kingman's book~\cite{kingman1992poisson}. 

Set
\[\B = \Big\{f:\R^d \to [0,\infty) \, : \,  f \text{ is locally bounded and Lebesgue measurable.}\Big\}.\]
For $f \in \B$, $x \in \R^d$ and $n\in \N$ we define
\begin{equation}\label{eq:defun}
U_n(x) = \ell\left( \ppp{nf}\cap [0,x]\right).
\end{equation}
In this paper, we give a direct verification argument proof of the following result.
\begin{theorem}\label{thm:cont}
Let $f \in \B$ such that (P) has a unique nondecreasing viscosity solution $u$. Then with probability one
\[n^{-\frac{1}{d}}U_n \longrightarrow u \ \ \text{locally uniformly on } [0,\infty)^d.\]
\end{theorem}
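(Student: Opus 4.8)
The plan is to establish matching upper and lower bounds for $n^{-1/d} U_n$ via a direct verification argument: I will show that, with probability one, the upper and lower relaxed semicontinuous limits of $n^{-1/d} U_n$ are respectively a subsolution and a supersolution of (P), and then invoke the uniqueness assumption together with a comparison principle for the nondecreasing viscosity solution. The first step is to record the elementary combinatorial monotonicity and superadditivity properties of $U_n$: $U_n$ is nondecreasing in each coordinate, $U_n(x) \leq U_n(z)$ for $x \leqq z$, and, crucially, a \emph{superadditivity along chains} statement of the form $\ell(\ppp{nf}\cap[0,z]) \geq \ell(\ppp{nf}\cap[0,x]) + \ell(\ppp{nf}\cap[x,z])$ for $x \leqq z$, which lets me localize. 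Then I would prove a \emph{local cell estimate}: on a small box $[x,x+h\vb{e}]$ where $f$ is nearly constant equal to $f(x)$, the quantity $\ell(\ppp{nf}\cap[x,x+h\vb{e}])$ is, after dividing by $n^{1/d}$, close to $c_d (f(x) h_1\cdots h_d)^{1/d}$ with high probability. This is the heart of the matter and rests on the classical longest-chain asymptotics of Bollob\'as--Winkler \cite{bollobas1988}, applied to a Poisson process of constant intensity in a box (the constant-intensity case reduces to the unit-intensity case by scaling coordinates), together with a Borel--Cantelli argument over a countable dense family of boxes with rational corners to upgrade ``in probability'' to ``almost surely, simultaneously for all such boxes.''

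With the cell estimate in hand, the verification argument proceeds as follows. Set $\bar u(x) = \limsup^* n^{-1/d}U_n(x)$ and $\underline u(x) = \liminf_* n^{-1/d}U_n(x)$ (the upper and lower semicontinuous envelopes of the pointwise $\limsup$ and $\liminf$). To show $\bar u$ is a viscosity subsolution of $u_{x_1}\cdots u_{x_d} = (c_d/d)^d f$: suppose $\phi$ is smooth and $\bar u - \phi$ has a strict local max at $x_0 \in \R^d_+$; I want $\phi_{x_1}(x_0)\cdots\phi_{x_d}(x_0) \leq (c_d/d)^d f(x_0)$. Using monotonicity of $U_n$, each $\phi_{x_i}(x_0) \geq 0$; if some $\phi_{x_i}(x_0)=0$ the inequality is trivial (modulo handling $f(x_0)$, which is $\geq 0$), so assume all are positive. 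Then along a near-optimal discrete chain realizing $U_n$ through a point near $x_0$, I telescope $\phi$ across consecutive chain points, bound each increment below using the gradient of $\phi$, and compare the total to $n^{-1/d}U_n$ near $x_0$; combined with the cell estimate this forces the product inequality by an AM--GM / optimization-over-directions computation — precisely the ``dimensional analysis'' heuristic from Section \ref{sec:informal} made rigorous. The supersolution property of $\underline u$ is dual: near a point where $\underline u - \phi$ has a min, I \emph{construct} a chain through the small cell whose length I lower-bound by the cell estimate, forcing $\phi_{x_1}(x_0)\cdots\phi_{x_d}(x_0) \geq (c_d/d)^d f(x_0)$.

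Finally, boundary behavior: since $\ppp{nf}\cap[0,x] = \emptyset$ when some $x_i = 0$, we get $U_n \equiv 0$ on $\partial\R^d_+$, so $\bar u = \underline u = 0$ there; together with the obvious bound $\bar u \leq \underline u + (\text{something}) $... more precisely $\underline u \leq \bar u$ always, and a comparison principle between the subsolution $\bar u$ and supersolution $\underline u$ (valid in the class of nondecreasing functions vanishing on $\partial\R^d_+$, which is where the hypothesis that (P) has a \emph{unique} such solution is used) yields $\bar u \leq \underline u$. Hence $\bar u = \underline u =: u$ is the unique nondecreasing viscosity solution, the pointwise limit exists, and the convergence is locally uniform by the standard argument (monotone, or via the Dini-type upgrade from pointwise convergence of equi-nondecreasing functions to a continuous limit). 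I expect the main obstacle to be the almost-sure cell estimate with \emph{uniform} control: one must carefully choose the scale $h = h(n) \to 0$ slowly enough that each cell still contains $\sim n^{1/d}\cdot(\text{diam})$-many comparable points for the Bollob\'as--Winkler asymptotics to kick in with good concentration, yet cover the relevant region by polynomially-many cells so that a union bound plus Borel--Cantelli closes; handling the non-constant intensity $f$ by sandwiching between constant intensities $f(x_0)\pm\eps$ on each cell is routine once the scale is chosen, but making the error terms in the verification argument (which must beat an arbitrary $\eps$) consistent across all cells simultaneously is the delicate bookkeeping.
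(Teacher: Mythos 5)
Your high-level plan matches the paper's strategy: treat $n^{-1/d}U_n$ like a monotone numerical scheme, prove stability and consistency, show any limit is a sub- and supersolution of (P), and conclude by uniqueness. The choice of Barles--Perthame relaxed half-limits rather than the paper's Arzel\`a--Ascoli extraction along subsequences is a harmless methodological variant. The gap is in the consistency step, which is where the actual content lives. You propose (i) a cell estimate for $\ell(\ppp{nf}\cap[x,x+h\vb{1}])$ on small boxes at a scale $h = h(n)\to 0$, and (ii) a ``telescope $\phi$ along a near-optimal chain'' argument to convert that into the product inequality. Neither piece closes. Telescoping $\phi(X_{k+1})-\phi(X_k)$ along a chain gives the total variation of $\phi$, but to bound the chain length from above you would need a per-step \emph{lower} bound on $\phi(X_{k+1})-\phi(X_k)$, and there is none: consecutive Poisson points on an optimal chain can be arbitrarily close. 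And the multiscale requirement $h(n)\to 0$ that you flag at the end is not delicate bookkeeping that will ``close'' -- it is a symptom of using the wrong geometry, and the union bound over $\sim h^{-d}$ cells together with concentration at scale $h$ does not obviously survive Borel--Cantelli without the monotonicity-in-$n$ that the paper deliberately avoids assuming.

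What you are missing is the geometric observation that drives the verification: the correct set to estimate is not a small box but a \emph{backward simplex} determined by the test function. Concretely, for a fixed scale $\eps>0$ one considers $A_\eps(x_0)=\{y\in B(x_0,\sqrt\eps): y\leqq x_0,\ \phi(y)\geq\phi(x_0)-\eps\}$, which is sandwiched between exact simplices $S_{x_0,q_\eps}$ with $q_\eps = D\phi(x_0)/(\eps+O(\eps^2))$, and one proves once and for all (Lemma~\ref{lem:gen-simplex}) that $n^{-1/d}\ell(\ppp{n}\cap S_{x,v})\to \tfrac{c_d}{d}(v_1\cdots v_d)^{-1/d}$ completely -- this is where the AM--GM you gesture at actually appears, in covering a simplex by boxes with one corner on the diagonal face. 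Crucially this estimate is at \emph{fixed} $\eps$; one sends $n\to\infty$ first (with probability one, via Theorem~\ref{thm:complete} and a countable dense family of simplices) and $\eps\to0$ second, eliminating any multiscale coupling. The bridge from the scheme to the test function is also not a telescoping: because $U_n$ is integer-valued and vanishes on $\partial\R^d_+$, the set $\A_{k,\eps}=\{z\leqq x_0: U_{n_k}(z)\geq U_{n_k}(x_0)-\lfloor\eps n_k^{1/d}\rfloor\}$ automatically contains a chain of length $\geq\eps n_k^{1/d}$ (the tail of an optimal chain to $x_0$), while the max/min property of $V_k-\phi$ at $x_k\to x_0$ traps $\A_{k,\eps}$ inside $A_{\eps\pm\eps^2}(x_0)$. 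Comparing these two facts with the simplex consistency gives $\eps\leq\tfrac{c_d}{d}(\eps+C\eps^2)\bigl(f^*(x_0)/\prod_i\phi_{x_i}(x_0)\bigr)^{1/d}$ and hence the subsolution inequality as $\eps\to 0$ (the supersolution side is analogous after perturbing $\phi$ by $\lambda\sum_i x_i$ to make all $\phi_{x_i}$ strictly positive). Without the backward-simplex consistency and the integer-valuedness trick, your sketch does not produce the product inequality.
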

\begin{figure}
\centering
\subfigure[$n=50$ points]{\includegraphics[clip = true, trim = 30 20 30 20, height=0.25\textwidth]{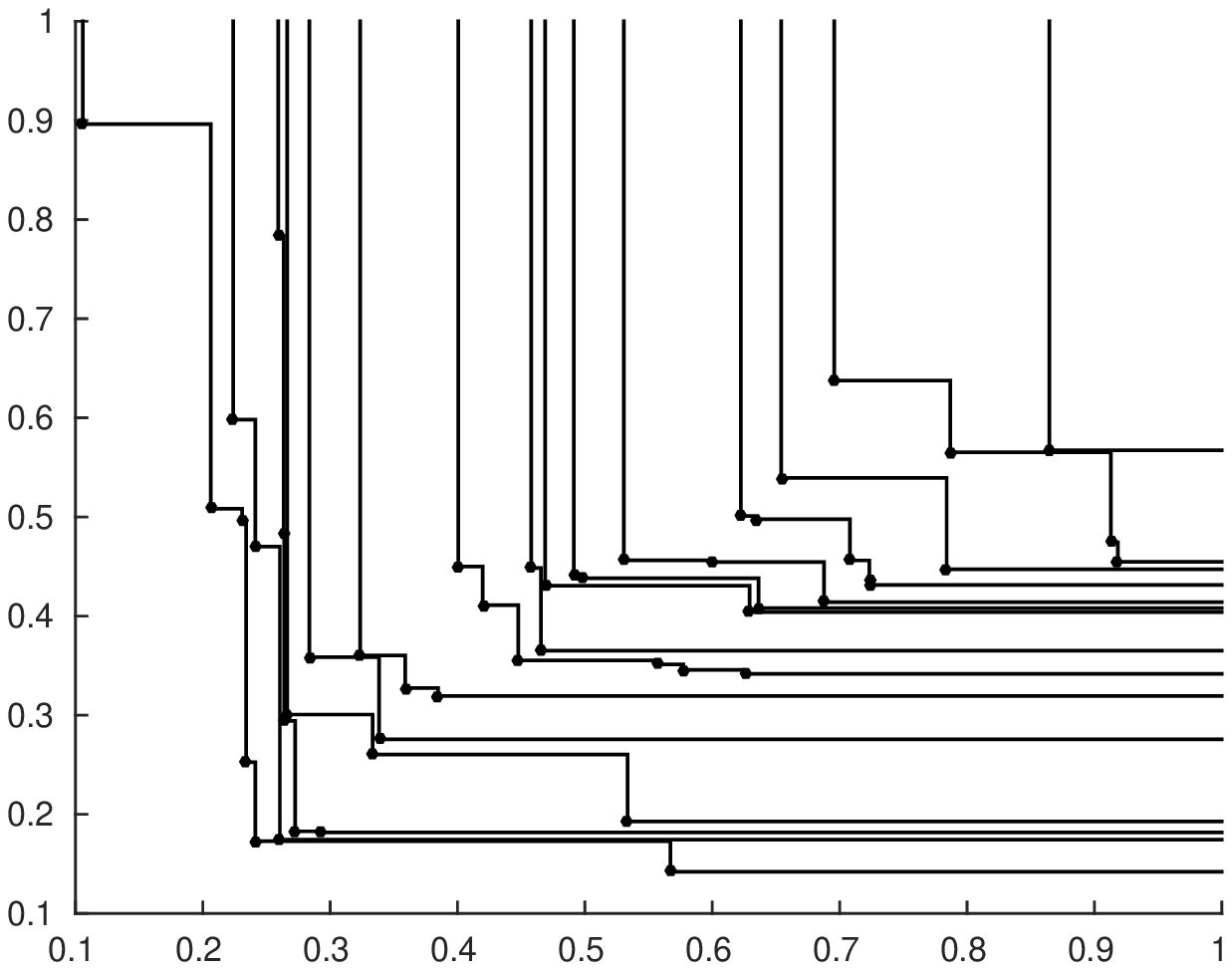}}
\subfigure[$n=10^4$ points]{\includegraphics[clip = true, trim = 30 20 30 20, height=0.25\textwidth]{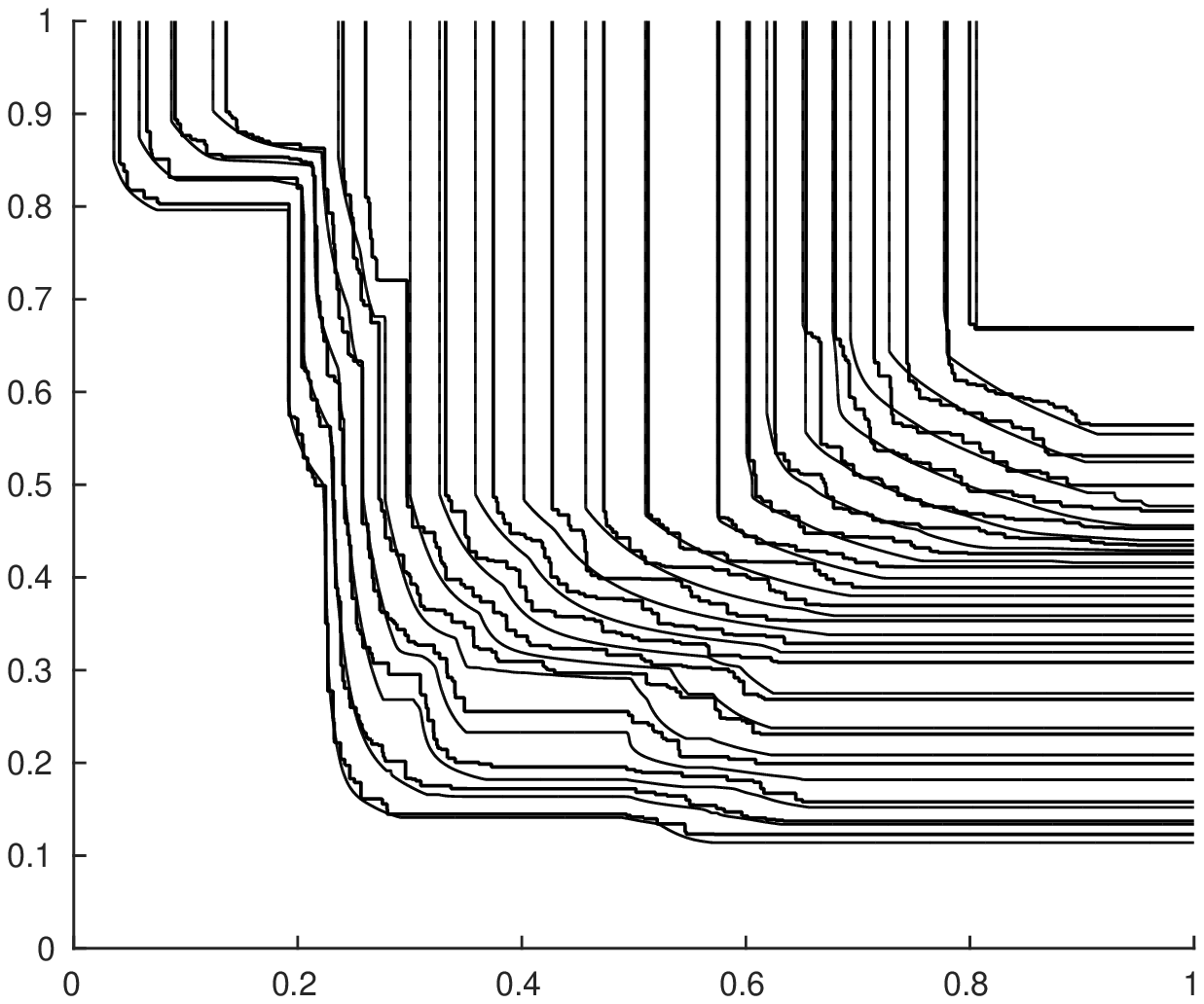}}
\subfigure[$n=10^6$ points]{\includegraphics[clip = true, trim = 30 20 30 20, height=0.25\textwidth]{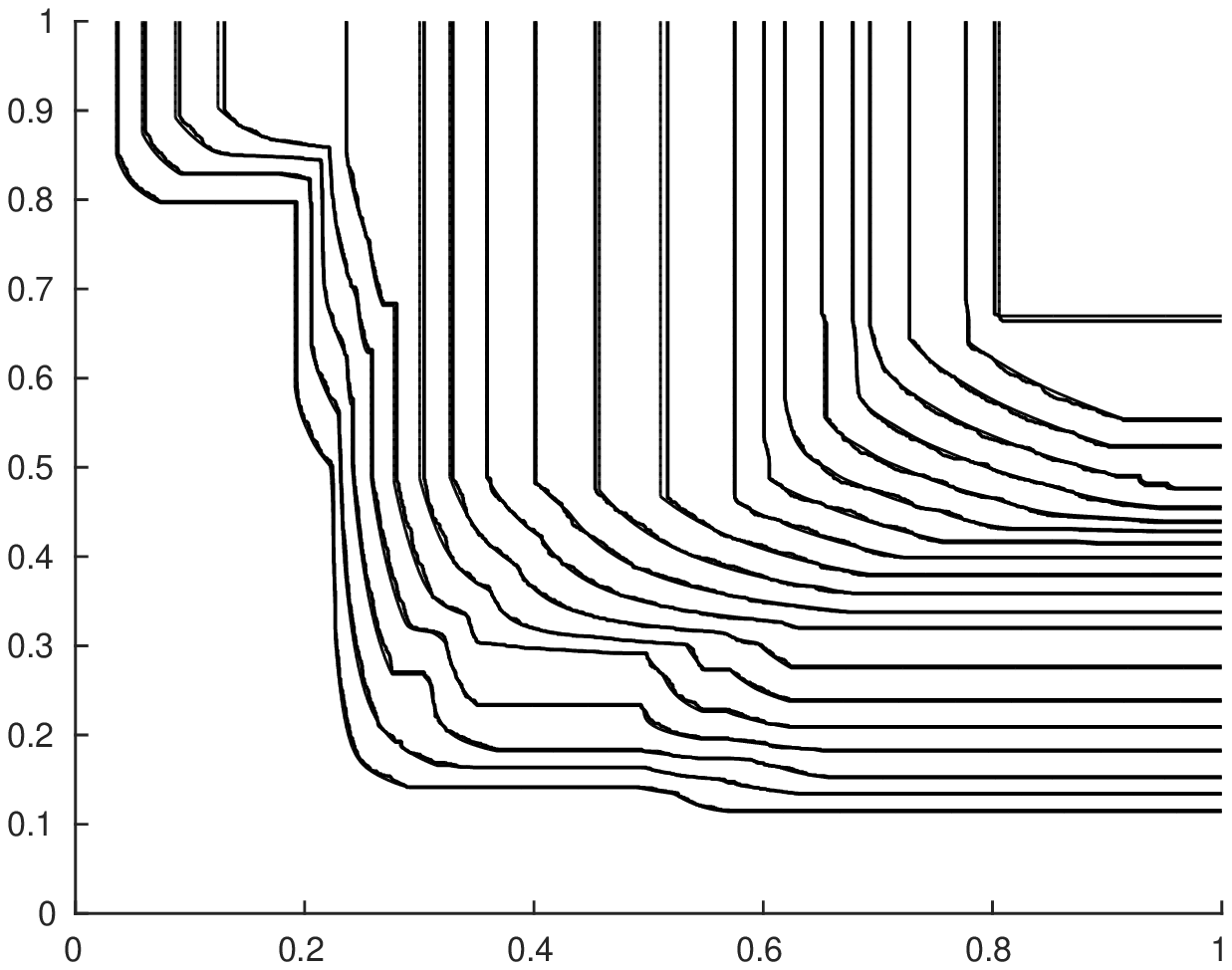}}
\caption{Visualization of the continuum limit given by Theorem \ref{thm:cont} applied to the distribution in Figure \ref{fig:demo}. In (b) and (c), we compare the Pareto fronts to the level sets of the viscosity solution of (P). }
\label{fig:demo2}
\end{figure}

In \cite{calder2014}, we showed that there exists a unique nondecreasing viscosity solution of (P) whenever $f$ is uniformly continuous on an open and bounded set $\O \subset \R^d_+$ with Lipschitz boundary, and $f(x)=0$ for $x \not\in \O$. In \cite{calder2015directed} we extended this uniqueness result to the case where $f$ is continuous on $[0,\infty)^d$, which allows the support of $f$ to be non-compact.  We expect uniqueness to hold under weaker regularity conditions on $f$. Figure \ref{fig:demo2} gives a visual illustration of Theorem \ref{thm:cont}.

In the proof of Theorem \ref{thm:cont}, we assume the family of Poisson point processes $\{\ppp{nf}\}_{n\in \N}$ is defined on a common probability space and we apply deterministic PDE arguments to the individual realizations 
\[U_n^\omega(x) = \ell(\ppp{nf}^\omega\cap [0,x]).\]
In the literature on the longest chain problem, it is common to take a construction for which $n\mapsto U_n^\omega$ is monotone nondecreasing for all realizations $\omega$. Indeed, this was necessary to obtain almost sure convergence in some early works (see \cite{bollobas1988,kesten1973comment}). Due to the sharp concentration of measure results established by Frieze \cite{frieze1991length}, Bollob\'as and Brightwell \cite{bollobas1992height}, and more recently by Talagrand \cite{talagrand1995concentration}, the monotonicity property is no longer required for almost sure convergence. Thus, our proof of Theorem \ref{thm:cont} does not assume any particular construction of $\{\ppp{nf}\}_{n\in \N}$ (see Theorem \ref{thm:complete}).

We should mention, however, that if we do construct a family of Poisson point processes $\{\ppp{tf}\}_{t \in \R_+}$ on a common probability space in such a way that
\begin{equation}\label{eq:mon}
\ppp{sf} \subset \ppp{tf} \ \ \text{ whenever } s \leq t,
\end{equation}
then the function $U_t(x) = \ell(\ppp{tf}\cap[0,x])$ is monotone nondecreasing as a function of $t \in \R_+$ for \emph{every} realization. It then follows directly from Theorem \ref{thm:cont} that 
\[t^{-\frac{1}{d}}U_t \longrightarrow u \ \ \text{locally uniformly on } [0,\infty)^d\]
with probability one as $t\to \infty$. One way to ensure that \eqref{eq:mon} is satisfied is to construct $\ppp{tf}$ via a generalization of \eqref{eq:cons}~\cite{kingman1992poisson}.

\subsection{Informal derivation}
\label{sec:informal}

As motivation, let us give an informal derivation of the Hamilton-Jacobi PDE continuum limit (P). This derivation was originally given in a slightly different form in \cite{calder2014}, and our proof of Theorem \ref{thm:cont} is based on these heuristics. Suppose $f:\R^d\to[0,\infty)$ is continuous and that $n^{-\alpha}U_n \longrightarrow u \in C^1(\R^d)$ uniformly with probability one for some $\alpha \in (0,1]$. The argument below not only derives (P), but also suggests the order of growth $\alpha=\frac{1}{d}$.

Let $x \in (0,\infty)^d$. Since $U_n$ is nondecreasing, the uniform limit $u$ must be nondecreasing and so $u_{x_i}(x) \geq 0$ for all $i$. Let us assume that $u_{x_i}(x)>0$ for all $i$.  Fix $v \in \R^d$ with $\langle Du(x), v\rangle > 0$ and consider the quantity $n^\alpha (u(x+v) - u(x))$.  This is approximately the number of Pareto fronts passing between $x$ and $x+v$ when $n$ is large.  When counting these fronts, we may restrict ourselves to Poisson points falling in the set
\begin{equation}\label{eq:usimplex}
A =\big\{y \in \R^d \, : \, y \leqq x + v \ \text{ and } \  u(y) \geq u(x) \big\}.
\end{equation}
This is because any points in $\{y \in \R^d \, : \, u(y) < u(x)\}$ will be on  previous Pareto fronts and only points that are coordinatewise less than $x+v$ can influence the Pareto rank of $x+v$. See Figure \ref{fig:derivation} for a depiction of this region and some quantities from the derivation.
\begin{figure}
\centering
\fbox{\includegraphics[width=0.4\textwidth]{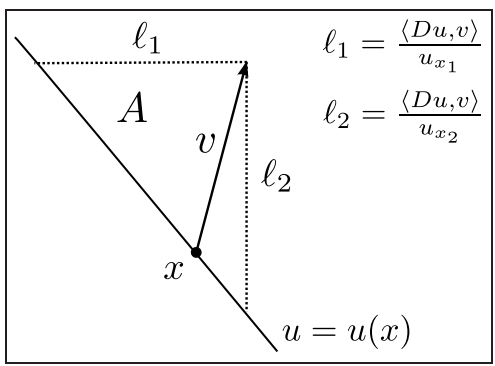}}
\caption{Some quantities from the informal derivation of the Hamilton-Jacobi PDE (P).}
\label{fig:derivation}
\end{figure}
Since $u_{x_i}(x) > 0$ for all $i$, and $u$ is $C^1$, $A$ is well approximated for small $|v|$ by a simplex with orthogonal corner, and the points within $A$ are approximately uniformly distributed.  Let $N=N(A)$ denote the number of Poisson points falling in $A$. Since we can scale the simplex into a standard simplex while preserving the Pareto fronts within $A$, it is reasonable to conjecture that the number of Pareto fronts within $A$ (or the length of a longest chain in $A$) is asymptotic to $c_1N^\alpha$ for some constant $c_1>0$, independent of $x$.  

By the law of large numbers, we have $N \approx n\int_A f(y) \, dy$.  Hence when $|v|>0$ is small 
\begin{equation}\label{eq:usim}
n^\alpha(u(x+v)-u(x)) \approx c\left(n\int_A f(y) \, dy\right)^{\alpha} \approx c_1n^{\alpha}|A|^{\alpha}f(x)^\alpha,
\end{equation}
where $|A|$ denotes the Lebesgue measure of $A$.
Let $\ell_1,\dots,\ell_d$ denote the side lengths of the simplex $A$.  Then $|A|\approx c_2 \, \ell_1\cdots \ell_d$ for a constant $c_2$.  Since $x+v-\ell_i e_i$ lies approximately on the tangent plane to the level set $\{y \, : \, u(y)=u(x)\}$, we see that
\[\langle Du(x), v - \ell_i e_i\rangle \approx 0.\]
This gives $\ell_i \approx u_{x_i}(x)^{-1}\langle D u(x),v\rangle$, and hence
\begin{equation}\label{eq:measureA}
|A|\approx c_2 (u_{x_1}(x)^{-1} \cdots u_{x_d}(x)^{-1}) \langle Du(x),v\rangle^d.
\end{equation}
For small $|v|$, we can combine \eqref{eq:usim} and \eqref{eq:measureA} to obtain
\[\langle Du(x), v\rangle \approx u(x+v) - u(x) \approx c_1 c_2^\alpha f(x)^\alpha (u_{x_1}(x)^{-\alpha} \cdots u_{x_d}(x)^{-\alpha}) \langle Du(x),v\rangle^{\alpha d}.\]
Therefore
\[u_{x_1}(x)^\alpha \cdots u_{x_d}(x)^\alpha \approx Cf(x)^\alpha\langle Du(x),v\rangle^{\alpha d - 1},\]
where $C = c_1c_2^\alpha$. Since the left hand side is independent of $v$, this suggests that $\alpha=\frac{1}{d}$ and 
\[u_{x_1} \cdots u_{x_d} = C^df \ \ \text{ on } \R^d.\]
  
Although this derivation is informal, it is straightforward and conveys the essence of the result. Inspecting the argument, we see that the main ingredients are an invariance with respect to scaling along coordinate axes, and some basic properties of the coordinatewise partial order. It is intriguing that we can derive the order of growth $n^\frac{1}{d}$ and the continuum limit (P) from such basic information. This seems analogous to dimensional analysis arguments in applied mathematics, which use ideas like self-similarity and scaling to derive fundamental laws for natural phenomena~\cite{barenblatt2003scaling}. A fundamental feature of dimensional analysis is that, like our argument above, it usually gives no information about the constants of proportionality. Contrary to dimensional analysis, which is typically nonrigorous, we show in this paper that the argument above can be made rigorous using the framework of viscosity solutions.

\subsection{Outline of proof}
\label{sec:outline}

Our new proof of Theorem \ref{thm:cont} is a rigorous version of the heuristic dimensional analysis argument given in Section \ref{sec:informal}. The main ideas used in our proof draw some inspiration from the Barles-Souganidis framework for convergence of numerical schemes \cite{barles1991}. The Barles-Souganidis framework establishes convergence of any numerical scheme that is monotone, consistent, and stable, provided the PDE admits a strong comparison principle. In our proof of Theorem \ref{thm:cont}, we view $U_n$ as a numerical approximation of the viscostiy solution of (P), and use ideas that parallel monotonicity, stability and consistency to prove convergence to the viscosity solution of (P). 

The longest chain problem is monotone in the following sense: For any finite sets $A,B\subset \R^d$,
\begin{equation}\label{eq:monotonicity}
A\subset B \implies \ell(A) \leq \ell(B).
\end{equation}
When proving convergence of numerical schemes to viscosity solutions, monotonicity of the scheme is used to replace the numerical solution by a smooth test function for which consistency is trivial. The monotonicity property \eqref{eq:monotonicity} is used for precisely the same purpose in the proof of Theorem \ref{thm:cont}, though consistency is non-trivial in this setting.

Stability of numerical schemes for PDE usually refers to a uniform bound on the numerical solutions that is independent of the grid resolution. Here, we use a type of asymptotic equicontinuity. Namely, in Theorem \ref{thm:holder2} we show that a bound of the form
\begin{equation}\label{eq:stability}
|U_n(x) - U_n(y)| \lesssim C|x - y|^\frac{1}{d}n^\frac{1}{d}
\end{equation}
holds for all $x,y$ with probability one. The stability estimate \eqref{eq:stability} and the monotonicity of $U_n$ can be combined with the Arzel\`a-Ascoli theorem to show that for every realization $\omega$ in a probability one event $\Omega$, there exists a subsequence $n_k^{-\frac{1}{d}}U^\omega_{n_k}$ converging locally uniformly on $[0,\infty)^d$ to a locally  H\"older-continuous function $U^\omega$.

The final step in the proof is to use consistency and monotonicity to identify $U^\omega$ as the unique nondecreasing viscosity solution of (P).  Our consistency result, Theorem \ref{thm:con}, is a statement on the asymptotic length of a longest chain in sets resembling the approximate simplex \eqref{eq:usimplex} used in the heuristic derivation. Namely, for a smooth test function $\phi$ with $\phi_{x_i}>0$ for all $i$, we prove asymptotics of the form
\[\ell( \ppp{nf}\cap A_\eps) \sim \left(\frac{c_d}{d} \left(\frac{f(x)}{\phi_{x_1}(x)\cdots \phi_{x_d}(x)}\right)^\frac{1}{d}\eps + O(\eps^2)\right)n^\frac{1}{d},\]
as $n\to \infty$, where
\[A_\eps(x) = \Big\{ y \in B(x,\sqrt{\eps}) \, : \, y \leqq x \text{ and } \phi(y) \geq \phi(x) - \eps\Big\}.\]
Notice the set $A_\eps(x)$ is similar to the set $A$ used in the heuristic derivation and defined in \eqref{eq:usimplex}. The main difference is that $A_\eps(x)$ is defined using a smooth test function, and more importantly, it is defined by looking ``backward'' from the point $x$, instead of looking ``forward'' as in \eqref{eq:usimplex}. This respects the natural domain of dependence of the nondominated sorting problem, and allows us to use the monotonicity of $\ell$ in the proof of Theorem \ref{thm:cont}. This is analogous to the idea of an upwind numerical scheme for Hamilton-Jacobi equations.

\section{Analysis of longest chain problem}
\label{sec:analysis}

Let us give a brief history of the longest chain problem. Let $X_1,\dots,X_n$ be \iid~random variables uniformly distributed on the unit hypercube $[0,1]^d$, and let $\ell_n = \ell(\{X_1,\dots,X_n\})$ be the length of a longest chain.  Hammersley \cite{hammersley1972} was the first to study the asymptotics of $\ell_n$, motivated by a connection to Ulam's famous problem of finding the longest monotone subsequence in a random sequence of numbers \cite{ulam1961}. Hammersley showed that in dimension $d=2$, $n^{-\frac{1}{2}} \ell_n \to c>0$ in probability as $n\to \infty$, and he conjectured that $c=2$. The value $c=2$ was later established in \cite{logan1977,vershik1977}. Hammersley's proof is now a classic application of subadditive ergodic theory. 

Bollob\'as and Winkler \cite{bollobas1988} extended Hammersley's result to dimensions $d\geq 3$ showing that  $n^{-\frac{1}{d}}\ell_n \to c_d>0$ in probability. They also established the bounds
\[\frac{d^2}{d!^\frac{1}{d}\Gamma\left(\frac{1}{d}\right)} \leq c_d < e,\]
which by Stirling's formula shows that $\lim_{d\to \infty} c_d = e$. Aside from $c_2=2$, the exact values of $c_d$ are still unknown. When the random variables $\{\ell_n\}_{n\in \N}$ are defined on a common probability space in such a way that $n \mapsto \ell_n$ is monotone nondecreasing along sample paths, the stronger almost sure convergence can be obtained via an observation of Kesten \cite{kesten1973comment}.

Here, we consider the longest chain among Poisson points. For $t>0$ let
\[L_t = \ell(\ppp{t}\cap[0,1]^d).\]
Then $L_t$ has the same distribution as 
\[\bar{L}_t = \ell(\ppp{1} \cap [0,t^\frac{1}{d}]^d).\]
Notice that the family $\{\bar{L}_t\}_{t>0}$ is defined on a common probability space and $t\mapsto \bar{L}_t$ is monotone nondecreasing along sample paths. Subadditive ergodic theory \cite{hammersley1972,kingman1973subadditive} can be employed to show that 
\[t^{-\frac{1}{d}}\bar{L}_t \to c_d \ \ \text{almost surely as }t\to \infty.\]
Since $L_t$ and $\bar{L}_t$ merely have the same distribution, this argument only gives convergence in probability for $L_t$.

Since we have made no assumptions about how the Poisson point process $\ppp{nf}$ is to be constructed, the proof of our main result (Theorem \ref{thm:cont}) relies on establishing almost sure convergence for sequences of random variables that have the same distribution as $L_t$, but may not necessarily satisfy the monotonicity condition.  Using the concentration of measure results provided by Talagrand's isoperimetric theory \cite{talagrand1995concentration} we can establish the following result.
\begin{theorem}\label{thm:complete}
For $t > 0$, let $L_t = \ell(\ppp{t}\cap[0,1]^d)$. Let $\{t_n\}_{n\in \N}$ be an increasing sequence of positive real numbers such that 
\begin{equation}\label{eq:seqreq}
\sum_{n=1}^\infty \exp\left(-\eps t_n^\frac{1}{d}\right) < \infty \ \ \text{for all } \eps>0.
\end{equation}
Then 
\[t_n^{-\frac{1}{d}}L_{t_n} \longrightarrow  c_d \ \ \text{completely as } n \to \infty.\]
\end{theorem}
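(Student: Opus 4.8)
The plan is to verify complete convergence straight from its definition, i.e.\ to show that for every $\eps>0$,
\[
\sum_{n=1}^\infty \P\!\left( \left| L_{t_n} - c_d\, t_n^{1/d} \right| > \eps\, t_n^{1/d} \right) < \infty .
\]
I would Poissonize by conditioning on $N := \#\bigl(\ppp{t}\cap[0,1]^d\bigr)$, which is a $\mathrm{Poisson}(t)$ random variable: given $N=k$, the restriction of $\ppp{t}$ to $[0,1]^d$ consists of $k$ i.i.d.\ points uniform on $[0,1]^d$, and $L_t$ equals the length $\ell_k$ of a longest chain among them. A longest chain is a \emph{configuration function} — a chain attaining $\ell_k$ is a certificate that involves only $\ell_k$ of the $k$ points — so Talagrand's convex-distance inequality applies and yields a tail bound of the form
\[
\P\!\left( \left| \ell_k - M_k \right| \ge \lambda \right) \le 4\exp\!\left( -\frac{\lambda^2}{4(M_k+\lambda)} \right),
\]
where $M_k$ is a median of $\ell_k$. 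By the theorem of Bollob\'as and Winkler quoted above, $k^{-1/d}\ell_k \to c_d$ in probability, hence $k^{-1/d}M_k \to c_d$; and since $k\mapsto M_k$ is nondecreasing ($\ell_k$ being stochastically increasing in $k$), for each $\eps>0$ there is $\delta=\delta(\eps)\in(0,1)$ such that on the event $\{\,|N-t|\le\delta t\,\}$ one has $|M_N - c_d t^{1/d}| \le \tfrac{\eps}{2}t^{1/d}$ for all $t$ large.

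Given this, I would split
\[
\P\!\left( \left| L_t - c_d t^{1/d} \right| > \eps t^{1/d} \right)
\le \P\!\left( \left| \ell_N - M_N \right| > \tfrac{\eps}{2}t^{1/d},\ |N-t|\le\delta t \right) + \P\!\left( |N-t| > \delta t \right).
\]
Applying the Talagrand bound conditionally on $N=k$ with $\lambda=\tfrac{\eps}{2}t^{1/d}$ and using $M_k\le C t^{1/d}$ on the good event controls the first term by $C(\eps)\exp(-c(\eps)t^{1/d})$, while the Chernoff bound for the Poisson distribution controls the second by $2\exp(-c'\delta^2 t)$. Summing over $n$, the first series is finite by hypothesis \eqref{eq:seqreq}, and the second is finite too since $t_n\ge t_n^{1/d}$ for $n$ large, so $\exp(-c'\delta^2 t_n)$ is dominated by a term appearing in \eqref{eq:seqreq}. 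This proves the theorem. (When $d=1$ every finite set is a chain, so $L_t=N\sim\mathrm{Poisson}(t)$ and the statement reduces to the Poisson tail bound, with $c_1=1$.)

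The step I expect to be the main obstacle is the concentration estimate: one must check that the longest-chain functional fits the hypotheses of Talagrand's isoperimetric inequality and, more importantly, that the effective scale in the denominator of the exponent is $M_k\sim t^{1/d}$ rather than $t$. It is precisely this $t^{1/d}$ — and not $t^{2/d}$ or $t$ — that appears in the tail bound, and the summability condition \eqref{eq:seqreq} is calibrated exactly to match it. A secondary, routine point is making the conditioning on $N$ rigorous and keeping the estimate $|M_N-c_dt^{1/d}|\le\tfrac{\eps}{2}t^{1/d}$ uniform over $|N-t|\le\delta t$, which follows from the monotonicity of $k\mapsto M_k$ together with $M_k/k^{1/d}\to c_d$.
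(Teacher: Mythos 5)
Your proof is correct and follows essentially the same approach as the paper's: Poissonize by conditioning on $N$, apply Talagrand's configuration-function concentration around the median, feed in the Bollob\'as--Winkler asymptotics, and observe that the resulting tail $\exp(-c(\eps)t^{1/d})$ matches the summability hypothesis \eqref{eq:seqreq}, with the Poisson-count fluctuation controlled by a Chernoff bound that is even more strongly summable. The only cosmetic difference is that you pass from convergence in probability directly to $\M[\ell_k]/k^{1/d}\to c_d$ and use a $\delta t$ window, whereas the paper routes through the mean via an $|\E[\ell_n]-\M[\ell_n]|\lesssim \log(n)\,n^{1/(2d)}$ estimate and a $t+t^{3/4}$ window; both variants are fine.
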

 Recall that a sequence of random variables $X_1,X_2,\dots,X_n,\dots$ \emph{converges completely} to a random variable $X$ if 
\[\sum_{n=1}^\infty P(|X_n - X|>\eps) < \infty \ \ \text{for all } \eps>0.\]
By the Borel-Cantelli lemma, complete convergence implies almost sure convergence. The advantage of complete convergence is that it depends only on the distributions of $X_n$ and $X$, and not on any particular construction of the random variables. We believe Theorem \ref{thm:complete} is likely well-known in the probability literature, but to the best of our knowledge it has not been published. For the sake of completeness, we have sketched the proof of Theorem \ref{thm:complete} in the appendix.

\subsection{Stability}
\label{sec:stability}

In this section, we prove in Theorem \ref{thm:holder2} a stability result for the longest chain problem. We first have a preliminary lemma.
\begin{lemma}\label{lem:holder}
Let $f\in \B$, and define $U_n$ by \eqref{eq:defun}. Then for any $x,y \in [0,\infty)^d$
\begin{equation}\label{eq:holder2}
\limsupn n^{-\frac{1}{d}}(U_n(x) - U_n(y)) \leq c_ddM^\frac{d-1}{d}\|f\|_{L^\infty([0,M]^d)}^\frac{1}{d}|x - y|^\frac{1}{d}
\end{equation}
with probability one, where $M=\max\{x_1,y_1,x_2,y_2,\dots,x_d,y_d\}$.
\end{lemma}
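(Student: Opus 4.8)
The plan is to control $U_n(x)-U_n(y)$ by splitting a longest chain in $[0,x]$ into the part lying in $[0,y]$ and the part lying outside, and to bound the outside part by comparison with a homogeneous Poisson point process on a thin box. If $x\leqq y$ then $[0,x]\subseteq[0,y]$, so \eqref{eq:monotonicity} gives $U_n(x)\le U_n(y)$ and there is nothing to prove; thus assume $I:=\{i: y_i<x_i\}\ne\emptyset$. Writing $x\wedge y$ for the coordinatewise minimum of $x$ and $y$, one checks that $[0,x]=[0,x\wedge y]\cup\bigcup_{i\in I}B_i$ where $B_i:=\{p\in[0,x]: p_i\ge y_i\}$, and that $[0,x\wedge y]\subseteq[0,y]$. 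Given a longest chain $C$ in $\ppp{nf}\cap[0,x]$, the points of $C$ in $[0,x\wedge y]$ form a chain in $[0,y]$ and so number at most $U_n(y)$, while for each $i\in I$ the points of $C$ in $B_i$ form a chain in $\ppp{nf}\cap B_i$ and so number at most $\ell(\ppp{nf}\cap B_i)$. Since every point of $C$ lies in one of these sets, this yields the deterministic inequality
\[U_n(x)\ \le\ U_n(y)+\sum_{i\in I}\ell(\ppp{nf}\cap B_i).\]

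Next I would estimate each box term. Fix $i\in I$. Since $B_i\subseteq[0,M]^d$, the intensity $nf$ is at most $nK$ on $B_i$ with $K:=\|f\|_{L^\infty([0,M]^d)}<\infty$; hence, by a thinning coupling together with \eqref{eq:monotonicity}, $\ell(\ppp{nf}\cap B_i)$ is stochastically dominated by $\ell(\ppp{nK}\cap B_i)$. The box $B_i$ has side lengths $x_i-y_i$ and $x_j$ for $j\ne i$, so its volume satisfies $V_i\le(x_i-y_i)M^{d-1}$; since longest chains are invariant under translations of $\R^d$ and under independent positive rescalings of the coordinate axes, rescaling $B_i$ to a cube shows that $\ell(\ppp{nK}\cap B_i)$ has the same law as $L_{nKV_i}=\ell(\ppp{nKV_i}\cap[0,1]^d)$. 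Now apply Theorem \ref{thm:complete} along the sequence $t_n:=nKV_i$, which is increasing and satisfies \eqref{eq:seqreq} because $t_n^{1/d}$ grows like $n^{1/d}$: for each $\delta>0$, complete convergence gives $\sum_n P\big(L_{t_n}>(c_d+\delta)\,t_n^{1/d}\big)<\infty$, and combining this with the stochastic domination and the Borel--Cantelli lemma yields $\limsupn n^{-1/d}\ell(\ppp{nf}\cap B_i)\le(c_d+\delta)(KV_i)^{1/d}$ almost surely; letting $\delta\downarrow0$ through a countable sequence gives $\limsupn n^{-1/d}\ell(\ppp{nf}\cap B_i)\le c_d(KV_i)^{1/d}$ almost surely.

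Finally, summing the deterministic inequality over the finitely many $i\in I$ and using $V_i\le(x_i-y_i)M^{d-1}$ together with $(x_i-y_i)^{1/d}\le|x-y|^{1/d}$, we get
\[\limsupn n^{-1/d}\big(U_n(x)-U_n(y)\big)\ \le\ c_d\,K^{1/d}M^{\frac{d-1}{d}}\sum_{i\in I}(x_i-y_i)^{1/d}\ \le\ c_d\,d\,M^{\frac{d-1}{d}}\|f\|_{L^\infty([0,M]^d)}^{1/d}\,|x-y|^{1/d}\]
almost surely, which is exactly the bound \eqref{eq:holder2}. The step I expect to be genuinely delicate is the passage from the distributional comparison with $L_t$ to an almost-sure $\limsup$ bound for the given — but otherwise unspecified — joint construction of $\{\ppp{nf}\}_{n\in\N}$; this is precisely why Theorem \ref{thm:complete} is stated in terms of complete convergence, which depends only on marginal laws and is therefore preserved under stochastic domination and summable through Borel--Cantelli. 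The remaining ingredients are elementary geometry and bookkeeping of constants, and the degenerate case $K=0$ is trivial since then $\ppp{nf}$ almost surely places no points in $[0,M]^d$.
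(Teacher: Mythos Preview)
Your proof is correct and follows essentially the same strategy as the paper: split a longest chain in $[0,x]$ into the portion lying in $[0,x\wedge y]\subset[0,y]$ and portions lying in thin coordinate slabs, bound each slab contribution by comparison with a homogeneous Poisson process via scaling, and invoke Theorem~\ref{thm:complete}. The only cosmetic differences are that the paper telescopes one coordinate at a time (proving the bound first for $U_n(x+he_i)-U_n(x)$) and uses an explicit superposition coupling $\ppp{}=\ppp{nf}\cup\ppp{ng}$ on the common probability space, whereas you handle all coordinates simultaneously and phrase the comparison as stochastic domination plus Borel--Cantelli; both routes are equivalent and yield the same constant.
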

\begin{proof}
We first show that
\begin{equation}\label{eq:holder}
\limsupn n^{-\frac{1}{d}}(U_n(x + he_i) - U_n(x)) \leq c_dM^\frac{d-1}{d}\|f\|_{L^\infty([x_ie_i,x+he_i])}^\frac{1}{d}h^\frac{1}{d},
\end{equation}
for all $h>0$ and $i \in \{1,\dots,d\}$.
Without loss of generality we may assume that $i=1$. If $x_j = 0$ for some $j\geq 2$, then $x,x+he_1 \in \partial \R^d_+$, and hence $U_n(x) = U_n(x+he_1) = 0$ with probability one. Thus, we may assume that $x_j >0$ for all $j\geq 2$. 

Let $L = U_n(x+he_1)$ and let $C=\{X_1,\dots,X_L\}$ be a chain in $\ppp{nf}\cap [0,x+he_1]$ of length $L$. We can split this chain into two chains, $C_1$ and $C_2$, such that $C_1$ lies in the rectangle $[0,x]$, and $C_2$ belongs exclusively to $R:=[x_1e_1,x+he_1]$. See Figure \ref{fig:stability} for a depiction of the setup. By the definition of $U_n$
\begin{figure}
\centering
\includegraphics[width=0.6\textwidth]{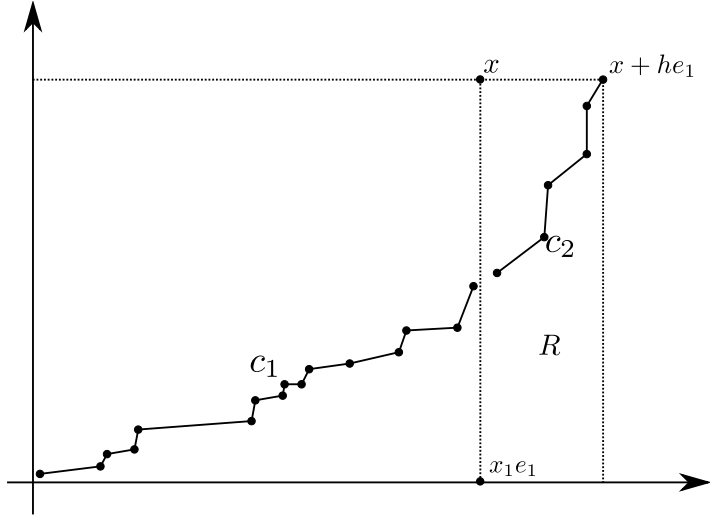}
\caption{A depiction of the chains $C_1$ and $C_2$ from the proof of Lemma \ref{lem:holder}.}
\label{fig:stability}
\end{figure}
\begin{equation}\label{eq:holder-prelim}
U_n(x+he_1) = \ell(C_1) + \ell(C_2) \leq U_n(x) + \ell(\ppp{nf}\cap R).
\end{equation}

Let $K=\sup_R f$ and set 
\[g(x)=\begin{cases}
K-f(x),& \text{if } x \in R \\
0,& \text{otherwise.}\end{cases}\] 
Let $\ppp{ng}$ be a Poisson point process with intensity $ng$ and let $\ppp{}=\ppp{nf}\cup \ppp{ng}$. Then $\ppp{}$ is a Poisson point process with intensity $\lambda$, where
\[\lambda(x)=nf(x)+ng(x)=\begin{cases}
nK,& \text{if } x \in R \\
nf(x),& \text{otherwise.}\end{cases}\] 
Let $N=hx_2x_3\cdots x_d Kn$. By scaling, $\ell(\ppp{}\cap R)$ has the same distribution as $\ell(\ppp{N}\cap[0,1]^d)$. By Theorem \ref{thm:complete} and the inclusion $\ppp{nf}\subset \ppp{}$ we have
\[\limsup_{n\to \infty} n^{-\frac{1}{d}}\ell(\ppp{nf}\cap R)\leq \lim_{n\to \infty} n^{-\frac{1}{d}} \ell(\ppp{}\cap R) = c_d(x_2\cdots x_d)^\frac{1}{d}K^\frac{1}{d} h^\frac{1}{d} \leq c_d M^\frac{d-1}{d} K^\frac{1}{d} h^\frac{1}{d}\]
with probability one.  Combining this with \eqref{eq:holder-prelim} establishes \eqref{eq:holder}.

We can apply \eqref{eq:holder} in each coordinate and note that $\sum |x_i-y_i|^\frac{1}{d} \leq d^{1-\frac{1}{2d}}|x-y|^\frac{1}{d}$ to establish the desired result \eqref{eq:holder2}.
\end{proof}
Recall that $f^*$ and $f_*$ denote the upper and lower semicontinuous envelopes of $f$, respectively, defined by
\[f^*(x) = \limsup_{y \to x} f(y),\]
and $f_* = -(-f)^*$.
\begin{theorem}[Stability]\label{thm:holder2}
Let $f\in \B$, and define $U_n$ by \eqref{eq:defun}. Then with probability one 
\begin{equation}\label{eq:holder-final}
\limsupn n^{-\frac{1}{d}}(U_n(x) - U_n(y)) \leq c_ddM^\frac{d-1}{d}\|f^*\|_{L^\infty([0,M]^d)}^\frac{1}{d}|x-y|^\frac{1}{d} 
\end{equation}
for all $x,y \in [0,\infty)^d$, where $M=\max\{x_1,y_1,x_2,y_2,\dots,x_d,y_d\}$.
\end{theorem}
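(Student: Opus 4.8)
The plan is to upgrade Lemma~\ref{lem:holder} — which gives \eqref{eq:holder2} for a \emph{fixed} pair $(x,y)$ on a pair-dependent almost sure event — to a single almost sure event on which the estimate holds \emph{simultaneously} for all $x,y$, and then to replace $\|f\|_{L^\infty}$ by $\|f^*\|_{L^\infty}$ via a purely deterministic comparison of sup--norms over nested rectangles. The key tools are the monotonicity \eqref{eq:monotonicity} of $\ell$ and the countable dense structure of $[0,\infty)^d$.

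First I would fix a countable dense set $D=\Q^d\cap[0,\infty)^d$ and let $\Omega_0$ be the intersection, over all pairs $(p,q)\in D\times D$, of the probability one events on which \eqref{eq:holder2} holds for that pair; being a countable intersection, $\P(\Omega_0)=1$, and on $\Omega_0$ the estimate \eqref{eq:holder2} holds for \emph{every} pair of rational points. Next I would fix $\omega\in\Omega_0$ and arbitrary $x,y\in[0,\infty)^d$, and approximate: for rational $p\geqq x$ and $q\leqq y$ (no relation between $p$ and $q$ is needed), the inclusions $[0,x]\subset[0,p]$ and $[0,q]\subset[0,y]$ together with \eqref{eq:monotonicity} give $U_n(x)-U_n(y)\le U_n(p)-U_n(q)$ for every $n$, hence
\[\limsupn n^{-\frac1d}(U_n(x)-U_n(y))\le\limsupn n^{-\frac1d}(U_n(p)-U_n(q))\le c_d d\,M_{p,q}^{\frac{d-1}{d}}\|f\|_{L^\infty([0,M_{p,q}]^d)}^{\frac1d}|p-q|^{\frac1d},\]
where $M_{p,q}=\max\{p_1,q_1,\dots,p_d,q_d\}$. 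Letting $p\searrow x$ and $q\nearrow y$ coordinatewise through $D$, we have $|p-q|\to|x-y|$ and $M_{p,q}\to M$ (with $M_{p,q}$ possibly approaching $M$ from above when $M$ is attained by a coordinate of $x$), so by continuity of $t\mapsto t^{(d-1)/d}$,
\[\limsupn n^{-\frac1d}(U_n(x)-U_n(y))\le c_d d\,M^{\frac{d-1}{d}}\Big(\inf_{M'>M}\|f\|_{L^\infty([0,M']^d)}\Big)^{\frac1d}|x-y|^{\frac1d}.\]

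It then remains to prove the deterministic inequality $\inf_{M'>M}\|f\|_{L^\infty([0,M']^d)}\le\|f^*\|_{L^\infty([0,M]^d)}$. Let $L$ denote the left-hand side and fix any $c<L$. For every $M'>M$ the set $\{f>c\}$ meets $[0,M']^d$ in positive Lebesgue measure. If $\{f>c\}$ already meets $[0,M]^d$ in positive measure then $c\le\|f\|_{L^\infty([0,M]^d)}\le\|f^*\|_{L^\infty([0,M]^d)}$ since $f\le f^*$. Otherwise $\{f>c\}$ meets each shell $[0,M']^d\setminus[0,M]^d$ in positive measure; picking $z^{M'}\in\{f>c\}$ in the shell for $M'=M+\tfrac1k$ and passing to a convergent subsequence, the limit $z$ lies in $[0,M]^d$ with $\max_i z_i=M$, and $f^*(z)\ge\limsup_k f(z^{M+1/k})\ge c$, so again $c\le\|f^*\|_{L^\infty([0,M]^d)}$. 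Since $c<L$ was arbitrary, the inequality follows, and combining it with the previous display yields \eqref{eq:holder-final} on $\Omega_0$.

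I expect this last deterministic step to be the main obstacle: the monotonicity of $U_n$ forces us to approach the rectangle $[0,M]^d$ from \emph{outside} along its outer face $\{y:\max_i y_i=M\}$, so the naive argument only controls the constant by $\|f\|_{L^\infty([0,M']^d)}$ for $M'>M$; the point of the upper semicontinuous envelope is precisely that $f^*$ on the outer face of $[0,M]^d$ already ``sees'' the relevant exterior values of $f$. The rest (the countable intersection, the monotone sandwiching, the continuity of the exponents) is routine.
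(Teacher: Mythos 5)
Your argument follows the paper's proof essentially step for step: form the countable intersection $\Omega_0$ of the probability-one events from Lemma~\ref{lem:holder} over rational pairs, then for arbitrary $x,y$ choose rationals $p \geqq x$ and $q \leqq y$, use monotonicity of $U_n$ to reduce to the rational case, and pass to the limit $p\searrow x$, $q\nearrow y$. Where you go further than the paper is the final deterministic comparison $\inf_{M'>M}\|f\|_{L^\infty([0,M']^d)} \le \|f^*\|_{L^\infty([0,M]^d)}$, which the paper asserts with only the phrase ``since we can choose $\hat{x}$ and $\hat{y}$ arbitrarily close''; your shell argument makes this explicit, which is a genuine clarification. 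One small point worth noting: your proof of that last inequality mixes two interpretations of $\|\cdot\|_{L^\infty}$ --- you invoke positive Lebesgue measure of $\{f>c\}$ (essential sup) but then conclude from a pointwise bound $f^*(z)\ge c$ at a single limit point $z$ on the outer face $\{\max_i z_i = M\}$ (which only controls the actual sup, since that face is Lebesgue-null). So the argument as written proves the bound with $\sup_{[0,M]^d} f^*$ on the right; with the genuine essential supremum the inequality can actually fail (take $f\equiv 0$ on $[0,M]^d$ and $f\equiv 1$ outside). Reading the paper's proof of Lemma~\ref{lem:holder}, where $K=\sup_R f$ is used as though it equalled $\|f\|_{L^\infty(R)}$, it is clear the paper also treats $\|\cdot\|_{L^\infty}$ as an ordinary supremum, so your reading is consistent with the author's intent, but it would be cleaner to state this convention up front.
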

It is important to emphasize in Theorem \ref{thm:holder2} that \eqref{eq:holder-final} holds with probability one \emph{simultaneously} for all $x,y \in [0,\infty)^d$. This is a far stronger statement than Lemma \ref{lem:holder}, where it was shown that for fixed $x,y \in [0,\infty)^d$, \eqref{eq:holder-final} holds with probability one. This stronger result gives us a form of compactness that is used in the proof of Theorem \ref{thm:cont}.
\begin{proof}
For $x,y \in [0,\infty)^d$ let $\Omega_{x,y}$ denote the event that \eqref{eq:holder2} holds for $x,y$.
 By Lemma \ref{lem:holder}, $\P(\Omega_{x,y})=1$. Let 
\[\Omega = \bigcap \Big\{\Omega_{x,y} \, : \, x,y \in \Q^d\cap[0,\infty)^d\Big\} .\]
Being the countable intersection of probability one events, $\Omega$ has probability one. 

Let $x,y\in[0,\infty)^d$. Let $\hat{x},\hat{y} \in \Q^d\cap[0,\infty)^d$ such that $x \leqq \hat{x}$ and $\hat{y} \leqq y$, and set 
\[\hat{M}=\max\{\hat{x}_1,\hat{y}_1,\hat{x}_2,\hat{y}_2,\dots,\hat{x}_d,\hat{y}_d\}.\]
Then since $U_n$ is nondecreasing \[U_n(x) - U_n(y) \leq U_n(\hat{x}) - U_n(\hat{y}),\]
and therefore
\[\limsupn n^{-\frac{1}{d}}(U_{n}^\omega(x)- U_{n}^\omega(y)) \leq c_dd\hat{M}^\frac{d-1}{d}\|f\|_{L^\infty([0,\hat{M}]^d)}^\frac{1}{d}|\hat{x}-\hat{y}|^\frac{1}{d} \ \ \text{for all } \omega \in \Omega.\]
Since we can choose $\hat{x}$ and $\hat{y}$ arbitrarily close to  $x$ and $y$, respectively, we obtain 
\[\limsupn n^{-\frac{1}{d}}(U_{n}^{\omega}(x)- U_{n}^{\omega}(y)) \leq c_ddM^\frac{d-1}{d}\|f^*\|_{L^\infty([0,M]^d)}^\frac{1}{d}|x-y|^\frac{1}{d} \ \ \text{for all } \omega \in \Omega,\]
which completes the proof.
\end{proof}

\subsection{Consistency}

We now establish our main consistency result. For $x,v \in (0,\infty)^d$, we define 
\begin{equation}\label{eq:gen-simplex}
S_{x,v} = \Big\{ z \in \R^d \, : \, z \leqq x \text{ and } \langle x-z,v\rangle \leq 1\Big\}.
\end{equation}
The set $S_{x,v}$ is a simplex with orthogonal corner at $x$ and side lengths $1/v_1,\dots,1/v_d$. See Figure \ref{fig:simplex} for an illustration of $S_{x,v}$. We also define $\vb{1} = (1,\dots,1) \in \R^d$.
\begin{figure}[t!]
\centering
\includegraphics[width=0.5\textwidth]{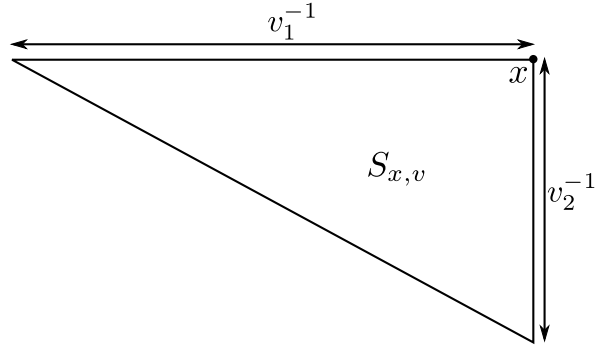}
\caption{Illustration of the simplex $S_{x,v}$.}
\label{fig:simplex}
\end{figure}
\begin{lemma}\label{lem:gen-simplex}
Let $x,v \in (0,\infty)^d$ and let $\{t_n\}_{n\in \N}$ be any increasing sequence of positive real numbers satisfying \eqref{eq:seqreq}. Then 
\begin{equation}\label{eq:gen-simplex-lim}
\lim_{n\to \infty}t_n^{-\frac{1}{d}} \ell\left(\ppp{t_n}\cap S_{x,v} \right) = \frac{c_d}{d(v_1\cdots v_d)^\frac{1}{d}}
\end{equation}
in the sense of complete convergence.
\end{lemma}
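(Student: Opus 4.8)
The plan is to reduce the statement to the asymptotics of the longest chain in a fixed \emph{standard} simplex, and then establish matching upper and lower bounds for that quantity, each of which reduces via scaling to Theorem \ref{thm:complete} applied to a box. Throughout I use the elementary scaling fact that for any axis-parallel box $B = \prod_{i=1}^d [a_i,b_i] \subset [0,1]^d$ with positive side lengths, the map sending $B$ affinely onto $[0,1]^d$ carries $\ppp{s}\cap B$ to $\ppp{s|B|}\cap[0,1]^d$ while preserving chains, so that $s^{-\frac1d}\ell(\ppp{s}\cap B) = |B|^{\frac1d}(s|B|)^{-\frac1d}\ell(\ppp{s|B|}\cap[0,1]^d)$ in distribution; since $s|B|$ satisfies \eqref{eq:seqreq} whenever $s$ does, Theorem \ref{thm:complete} gives $s^{-\frac1d}\ell(\ppp{s}\cap B) \to c_d|B|^{\frac1d}$ completely along any sequence satisfying \eqref{eq:seqreq}.

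\textbf{Reduction.} Let $\Delta = \{w\in[0,\infty)^d : w_1 + \cdots + w_d \le 1\}$. The affine map $T(z) = (v_1(x_1-z_1),\dots,v_d(x_d-z_d))$ sends $S_{x,v}$ onto $\Delta$, has constant Jacobian determinant of modulus $v_1\cdots v_d$, and is a composition of positive coordinate dilations, a translation, and the reflection $z\mapsto -z$; in particular it pushes $\ppp{t_n}$ forward to a constant-intensity Poisson point process of intensity $s_n := t_n/(v_1\cdots v_d)$ and it maps chains to chains (the reflection reverses a chain but preserves its length). Hence $\ell(\ppp{t_n}\cap S_{x,v})$ and $\ell(\ppp{s_n}\cap\Delta)$ have the same distribution, $\{s_n\}$ again satisfies \eqref{eq:seqreq} (being a positive multiple of $t_n$), and — since complete convergence depends only on the laws — it suffices to prove $s^{-\frac1d}\ell(\ppp{s}\cap\Delta)\to c_d/d$ completely along any sequence satisfying \eqref{eq:seqreq}; the constant in \eqref{eq:gen-simplex-lim} then results from $t_n^{-\frac1d} = (v_1\cdots v_d)^{-\frac1d}s_n^{-\frac1d}$.

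\textbf{Lower and upper bounds.} Since $w_1 + \cdots + w_d \le d\cdot\frac1d = 1$ on $[0,\frac1d]^d$, we have $[0,\frac1d]^d\subset\Delta$, so $\ell(\ppp{s}\cap\Delta)\ge\ell(\ppp{s}\cap[0,\frac1d]^d)$, and the scaling fact gives $s^{-\frac1d}\ell(\ppp{s}\cap[0,\frac1d]^d)\to c_d(1/d^d)^{1/d} = c_d/d$ completely. For the upper bound, the key point is that a monotone chain $z^{(1)}\leqq\cdots\leqq z^{(L)}$ lies inside its bounding box $\prod_i[z^{(1)}_i,z^{(L)}_i]$, whose side lengths sum to $\sum_i(z^{(L)}_i - z^{(1)}_i) = \sum_i z^{(L)}_i - \sum_i z^{(1)}_i \le 1$ when the chain lies in $\Delta$; by AM--GM such a box $B$ satisfies $|B|^{1/d}\le\frac1d$. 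To make this uniform over the (random) bounding box, fix $\eta>0$, round the endpoints of any such box outward to the grid $\eta\Z\cap[0,1]$ in each coordinate to obtain a box from a finite family $\mathcal B_\eta$, all of whose members are contained in $[0,1]^d$ and have side-length sum at most $1+2d\eta$, hence $|B'|^{1/d}\le(1+2d\eta)/d$ for $B'\in\mathcal B_\eta$. Then $\ell(\ppp{s}\cap\Delta)\le\max_{B'\in\mathcal B_\eta}\ell(\ppp{s}\cap B')$, and since $\mathcal B_\eta$ is finite and each $s^{-\frac1d}\ell(\ppp{s}\cap B')$ converges completely to $c_d|B'|^{1/d}\le c_d(1+2d\eta)/d$, a union bound over $\mathcal B_\eta$ (summing finitely many convergent series) shows that $\sum_n \P\big(s_n^{-\frac1d}\ell(\ppp{s_n}\cap\Delta) > c_d(1+2d\eta)/d + \delta\big)<\infty$ for every $\delta>0$; letting $\eta,\delta\to0$ and combining with the lower bound gives $s^{-\frac1d}\ell(\ppp{s}\cap\Delta)\to c_d/d$ completely.

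\textbf{Main obstacle.} The only delicate step is the uniformity in the upper bound: the scaling fact cannot be applied directly to the bounding box of the random longest chain, so one must discretize to a finite family of grid boxes and verify that complete convergence survives the finite union bound — everything else is routine. (The measure-zero event that two Poisson points agree in some coordinate, which could make a bounding box degenerate, is harmless: enlarging grid boxes slightly keeps the family finite.)
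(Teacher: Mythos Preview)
Your proof is correct and follows essentially the same approach as the paper: both reduce via an order-preserving affine map to a standard simplex, use an inscribed cube for the lower bound, and for the upper bound cover all chains by a finite family of boxes whose side-length sums are close to $1$ so that AM--GM bounds each $|B|^{1/d}$ by roughly $1/d$. The only cosmetic difference is that the paper works with $S_{\vb{1},\vb{1}}$ and uses boxes $[x,\vb{1}]$ with the corner $\vb{1}$ fixed and $x$ on a grid hyperplane, whereas you reflect to the origin simplex $\Delta$ and discretize both corners of the chain's bounding box; the underlying idea and the complete-convergence bookkeeping are the same.
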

\begin{proof}
We first show that
\begin{equation}\label{eq:simplex-lim}
\lim_{n\to \infty}t_n^{-\frac{1}{d}} \ell\left(\ppp{t_n}\cap S_{\vb{1},\vb{1}} \right) = \frac{c_d}{d} \ \ \text{ completely.}
\end{equation}
Note that we can write
\[S_{\vb{1},\vb{1}} = \Big\{ x \in [0,1]^d \, : \, x_1 + \cdots + x_d \geq d-1\Big\}.\]
Let $K$ be a positive integer and define
\[ I = \left\{ x \in [0,1]^d \, : \, Kx \in \Z^d \text{ and } x_1 + \cdots +x_d = d-1-\frac{d}{K}\right\}.\]
It follows from this construction that \emph{every} chain in $S_{\vb{1},\vb{1}}$ is contained entirely in $[x,\vb{1}]$ for some $x \in I$. Since $I$ is a finite set
\begin{equation}\label{eq:maxbound}
\ell\left(\ppp{t}\cap S_{\vb{1},\vb{1}})\right) \leq \max\Big\{ \ell\left(\ppp{t}\cap [x,\vb{1}]\right) \, : \, x \in I\Big\}.
\end{equation}
Fix $x \in I$ and let $N_n =t_n\prod_{i=1}^d(1-x_i)$. Since $\ell(\ppp{t_n}\cap [x,\vb{1}])$ has the same distribution as $\ell(\ppp{N_n}\cap [0,1]^d)$ we have by Theorem \ref{thm:complete} that
\[\lim_{n\to \infty} t_n^{-\frac{1}{d}}\ell(\ppp{t_n}\cap [x,\vb{1}]) = c_d\prod_{i=1}^d(1-x_i)^\frac{1}{d}\leq \frac{c_d}{d}\sum_{i=1}^d(1-x_i) = c_d\left(\frac{1}{d} + \frac{1}{K}\right),\]
in the sense of complete convergence. Combining this with \eqref{eq:maxbound} and noting that $K$ was arbitrary yields
\begin{equation}\label{eq:oneway}
\limsupn t_n^{-\frac{1}{d}}\ell\left(\ppp{t_n}\cap S_{\vb{1},\vb{1}})\right) \leq \frac{c_d}{d}
\end{equation}
completely.

For the other direction, let $z = \vb{1}(d-1)/d$. By Theorem \ref{thm:complete}
\[\liminfn t_n^{-\frac{1}{d}}\ell\left(\ppp{t_n}\cap S_{\vb{1},\vb{1}}\right) \geq \limn t_n^{-\frac{1}{d}} \ell(\ppp{t_n}\cap [z,\vb{1}]) = c_d \prod_{i=1}^d (1-z_i)^\frac{1}{d} = \frac{c_d}{d}.\]
in the sense of complete convergence. This establishes \eqref{eq:simplex-lim}.

We now establish \eqref{eq:gen-simplex-lim} with a simple scaling argument. Let $x,v \in (0,\infty)^d$ and define $\Phi:\R^d \to \R^d$ by
\[\Phi(y) = \left( (y_1-x_1)v_1 + 1,(y_2-x_2)v_2 + 1, \dots,(y_d-x_d)v_d +1\right).\]
Since $v_i>0$ for all $i$, the mapping $\Phi$ preserves the partial order $\leqq$ and therefore
\begin{equation}\label{eq:simplex-chain1}
\ell(\ppp{t_n}\cap S_{x,v}) = \ell\left( \Phi(\ppp{t_n}\cap S_{x,v})\right).
\end{equation}
Let $\ppp{}$ be the Poisson point process induced by the mapping $\Phi$, i.e., $\ppp{} = \Phi(\ppp{n})$. Then $\ppp{}$ is a Poisson point process on $\R^d$ with constant intensity $N_n=t_n/(v_1\cdots v_d)$. Since $\Phi(S_{x,v}) = S_{\vb{1},\vb{1}}$, we have by \eqref{eq:simplex-chain1} that $\ell(\ppp{t_n}\cap S_{x,v})$ and $\ell\left(\ppp{N_n}\cap S_{\vb{1},\vb{1}}\right)$ have the same distribution. The result now follows directly from  \eqref{eq:simplex-lim}.
\end{proof}
\begin{lemma}\label{lem:simplex-f}
Let $f\in \B$. Then for any $x,v \in (0,\infty)^d$
\begin{equation}\label{eq:gen-simplex-fmax}
\limsupn n^{-\frac{1}{d}} \ell\left(\ppp{nf}\cap S_{x,v} \right) \leq\frac{c_d}{d} \left(\frac{\sup_{S_{x,v}} f}{v_1\cdots v_d}\right)^\frac{1}{d},
\end{equation}
and
\begin{equation}\label{eq:gen-simplex-fmin}
\liminfn n^{-\frac{1}{d}} \ell\left(\ppp{nf}\cap S_{x,v} \right) \geq\frac{c_d}{d} \left(\frac{\inf_{S_{x,v}} f}{v_1\cdots v_d}\right)^\frac{1}{d}
\end{equation}
with probability one.
\end{lemma}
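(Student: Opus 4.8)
The plan is to sandwich the inhomogeneous process $\ppp{nf}$ between two Poisson processes of \emph{constant} intensity on $S_{x,v}$ and then quote Lemma \ref{lem:gen-simplex}. Since $v_i>0$ for all $i$, the defining inequality $\langle x-z,v\rangle\le 1$ with $z\leqq x$ forces $z_i\ge x_i-1/v_i$, so $S_{x,v}\subset\prod_i[x_i-1/v_i,x_i]$ is bounded; hence, as $f\in\B$ is locally bounded, $K:=\sup_{S_{x,v}}f<\infty$ and $k:=\inf_{S_{x,v}}f\in[0,\infty)$ are well defined.

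For the upper bound \eqref{eq:gen-simplex-fmax} I would argue as in the proof of Lemma \ref{lem:holder}. If $K=0$ then $f=0$ a.e.\ on $S_{x,v}$, so $\ppp{nf}\cap S_{x,v}$ is a.s.\ empty and the bound is trivial. If $K>0$, set $g=(K-f)\mathbf 1_{S_{x,v}}\ge 0$, let $\ppp{ng}$ be an independent Poisson process with intensity $ng$, and let $\ppp{}=\ppp{nf}\cup\ppp{ng}$, which is Poisson with intensity $n(f+g)$, equal to the constant $nK$ on $S_{x,v}$. By the monotonicity \eqref{eq:monotonicity} and $\ppp{nf}\cap S_{x,v}\subset\ppp{}\cap S_{x,v}$ we get $\ell(\ppp{nf}\cap S_{x,v})\le\ell(\ppp{}\cap S_{x,v})$ for every realization. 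The sequence $t_n=nK$ is increasing and satisfies \eqref{eq:seqreq}, and $\ell(\ppp{}\cap S_{x,v})$ has the distribution of $\ell(\ppp{t_n}\cap S_{x,v})$, so Lemma \ref{lem:gen-simplex} gives
\[
n^{-\frac1d}\ell(\ppp{}\cap S_{x,v})\longrightarrow\frac{c_d}{d}\left(\frac{K}{v_1\cdots v_d}\right)^{\frac1d}
\]
completely. Consequently $\sum_n\P\big(n^{-1/d}\ell(\ppp{nf}\cap S_{x,v})>\tfrac{c_d}{d}(K/(v_1\cdots v_d))^{1/d}+\eps\big)<\infty$ for every $\eps>0$, and Borel--Cantelli (intersecting over a sequence $\eps\downarrow 0$) yields \eqref{eq:gen-simplex-fmax} with probability one.

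For the lower bound \eqref{eq:gen-simplex-fmin} the dual move is thinning. If $k=0$ it is trivial since $\ell\ge 0$. If $k>0$, thin $\ppp{nf}\cap S_{x,v}$ by independently retaining each point $y$ with probability $k/f(y)\in(0,1]$ (well defined because $f\ge k>0$ on $S_{x,v}$); by the thinning theorem the retained set $\ppp{}$ is a Poisson process of constant intensity $nk$ on $S_{x,v}$, and $\ppp{}\subset\ppp{nf}\cap S_{x,v}$, hence $\ell(\ppp{nf}\cap S_{x,v})\ge\ell(\ppp{})$. Applying Lemma \ref{lem:gen-simplex} with $t_n=nk$ and Borel--Cantelli exactly as above proves \eqref{eq:gen-simplex-fmin} with probability one.

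This argument is essentially routine once Lemma \ref{lem:gen-simplex} is in hand; the only points requiring care are verifying that $t_n=nK$ (resp.\ $nk$) satisfies \eqref{eq:seqreq}, checking that the degenerate cases $K=0$ and $k=0$ are handled separately, and noting that complete convergence is inherited under the sandwiching because we bound the relevant tail probabilities termwise. The conceptual point worth stressing is that, since $f\in\B$ carries no continuity, a Riemann-sum refinement is unavailable and we can only sandwich $f$ by its crude sup and inf over the whole simplex—which is, however, all that is needed, since in the consistency theorem the simplex shrinks to the base point $x$ and continuity of $f$ at $x$ will collapse $\sup$ and $\inf$ to $f(x)$.
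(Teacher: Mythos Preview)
Your proposal is correct and follows essentially the same route as the paper: superpose an independent Poisson process of intensity $n(K-f)\mathbf 1_{S_{x,v}}$ to dominate from above, and thin by retention probability $k/f$ to dominate from below, then invoke Lemma~\ref{lem:gen-simplex} on the resulting constant-intensity processes. The only differences are cosmetic---you treat the degenerate case $K=0$ explicitly and spell out the Borel--Cantelli step, whereas the paper leaves these implicit.
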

\begin{proof}
Let $x,v \in (0,\infty)^d$, $K=\sup_{S_{x,v}} f$, and define
\[g(x)=\begin{cases}
K-f(x),& \text{if } x \in S_{x,v}, \\
0,& \text{otherwise.}\end{cases}\] 
Let $\ppp{ng}$ be a Poisson point process with intensity $ng$ and let $\ppp{}=\ppp{nf}\cup \ppp{ng}$. Then $\ppp{}$ is a Poisson point process with intensity $\lambda$, where
\[\lambda(x)=nf(x)+ng(x)=\begin{cases}
nK,& \text{if } x \in S_{x,v} \\
nf(x),& \text{otherwise.}\end{cases}\] 
By Lemma \ref{lem:gen-simplex} 
\[\lim_{n\to \infty} (Kn)^{-\frac{1}{d}} \ell\left(\ppp{}\cap S_{x,v}\right) = \frac{c_d}{d(v_1\cdots v_d)^\frac{1}{d}}\]
with probability one,  and hence
\[\lim_{n\to \infty} n^{-\frac{1}{d}} \ell\left(\ppp{}\cap S_{x,v}\right) = \frac{c_d}{d}\left(\frac{\sup_{S_{x,v}} f}{v_1\cdots v_d}\right)^\frac{1}{d}\]
with probability one. Since $\ppp{nf} \subset \ppp{}$, $\ell\left( \ppp{nf}\cap S_{x,v}\right) \leq \ell\left( \ppp{}\cap S_{x,v}\right)$, which establishes \eqref{eq:gen-simplex-fmax}.

To prove \eqref{eq:gen-simplex-fmin}, note first that if $\inf_{S_{x,v}}  f =  0$, then the result is trivial. Hence, we may take $m:=\inf_{S_{x,v}} f > 0$.  For each point $X$ from $\ppp{nf}$ that falls in $S_{x,v}$, color $X$ red with probability $m/f(X)$. The red points form another Poisson point process on $S_{x,v}$ with intensity $mn$~\cite{kingman1992poisson}. Let us denote this Poisson point process by $\ppp{}^r$.  By Lemma \ref{lem:gen-simplex}, we have that 
\begin{equation}\label{eq:markedlim}
\limn (mn)^{-\frac{1}{d}} \ell\left( \ppp{}^r\cap S_{x,v}\right)  = \frac{c_d}{d (v_1\cdots v_d)^\frac{1}{d}} \ \ \text{with probability one}.
\end{equation}
Since $\ppp{}^r\subset\ppp{nf}$, $\ell(\ppp{}^r\cap S_{x,v}) \leq \ell(\ppp{nf}\cap S_{x,v})$. Combining this with \eqref{eq:markedlim} completes the proof.
\end{proof}

\begin{lemma}\label{lem:simplex-f-R}
Let $f\in \B$. Then with probability one
\begin{equation}\label{eq:fstarmax}
\limsupn n^{-\frac{1}{d}} \ell\left(\ppp{nf}\cap S_{x,v} \right) \leq\frac{c_d}{d} \left(\frac{\sup_{S_{x,v}} f^*}{v_1\cdots v_d}\right)^\frac{1}{d},
\end{equation}
and 
\begin{equation}\label{eq:fstarmin}
\liminfn n^{-\frac{1}{d}} \ell\left(\ppp{nf}\cap S_{x,v} \right) \geq\frac{c_d}{d} \left(\frac{\inf_{S_{x,v}} f_*}{v_1\cdots v_d}\right)^\frac{1}{d},
\end{equation}
for all $x,v \in (0, \infty)^d$.
\end{lemma}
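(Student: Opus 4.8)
The plan is to bootstrap the fixed-parameter estimates of Lemma~\ref{lem:simplex-f} into a statement valid simultaneously for all $x,v\in(0,\infty)^d$, in exactly the way Theorem~\ref{thm:holder2} was deduced from Lemma~\ref{lem:holder}: fix a countable dense set of parameters, intersect the corresponding probability one events, and then treat a general pair $(x,v)$ by trapping the simplex $S_{x,v}$ between nearby rational simplices and invoking the monotonicity property~\eqref{eq:monotonicity} of $\ell$. Concretely, I would let $\Omega$ be the intersection, over all $q,w\in\Q^d\cap(0,\infty)^d$, of the (probability one, by Lemma~\ref{lem:simplex-f}) events that \eqref{eq:gen-simplex-fmax} and \eqref{eq:gen-simplex-fmin} both hold for the pair $(q,w)$; being a countable intersection, $\P(\Omega)=1$, and the whole argument is then carried out deterministically on $\Omega$.

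The geometric heart of the matter is the elementary inclusion criterion: for $a\leqq c$ in $(0,\infty)^d$ and $b,e\in(0,\infty)^d$ one has $S_{a,b}\subset S_{c,e}$ as soon as $\langle c-a,e\rangle+\max_i(e_i/b_i)\le 1$, which one checks simply by testing the $d+1$ vertices $a$ and $a-e_i/b_i$ of the simplex $S_{a,b}$ against the convex set $S_{c,e}$. Using this I would verify that, given $x,v\in(0,\infty)^d$ and $\delta>0$, there exist rational $\hat x,\hat v$ with $x\leqq\hat x$, $\hat v\leqq v$, $S_{x,v}\subset S_{\hat x,\hat v}$, $\prod_i\hat v_i\ge\prod_i v_i-\delta$, and $S_{\hat x,\hat v}$ contained in the $\delta$-neighborhood of $S_{x,v}$, and, symmetrically, rational $\tilde x,\tilde v$ with $\tilde x\leqq x$, $v\leqq\tilde v$, $S_{\tilde x,\tilde v}\subset S_{x,v}$, and $\prod_i\tilde v_i\le\prod_i v_i+\delta$. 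The only subtle point is that raising (or lowering) the orthogonal corner of the simplex shifts its far face, so one cannot keep the corner fixed and must compensate with a slight change of the normal vector $v$; this is why $\hat v\neq v$ and $\tilde v\neq v$. Since the vertices of $S_{\hat x,\hat v}$ and $S_{\tilde x,\tilde v}$ converge to those of $S_{x,v}$ as the parameters converge, both families shrink to $S_{x,v}$ in Hausdorff distance.

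With these simplices in hand the estimates follow quickly. For the upper bound, $S_{x,v}\subset S_{\hat x,\hat v}$ and \eqref{eq:monotonicity} give $\ell(\ppp{nf}\cap S_{x,v})\le\ell(\ppp{nf}\cap S_{\hat x,\hat v})$, so \eqref{eq:gen-simplex-fmax} applied on $\Omega$ to the rational pair $(\hat x,\hat v)$ yields $\limsupn n^{-1/d}\ell(\ppp{nf}\cap S_{x,v})\le\frac{c_d}{d}\big(\sup_{S_{\hat x,\hat v}}f\,/\prod_i\hat v_i\big)^{1/d}$; since $f^*$ is upper semicontinuous, the set $\{f^*<\sup_{S_{x,v}}f^*+\delta\}$ is open and contains the compact set $S_{x,v}$, hence also $S_{\hat x,\hat v}$ once $\hat x,\hat v$ are close enough to $x,v$, so $\sup_{S_{\hat x,\hat v}}f\le\sup_{S_{\hat x,\hat v}}f^*\le\sup_{S_{x,v}}f^*+\delta$, and letting $\delta\to0$ gives \eqref{eq:fstarmax}. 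The lower bound is even shorter: $S_{\tilde x,\tilde v}\subset S_{x,v}$ gives $\ell(\ppp{nf}\cap S_{\tilde x,\tilde v})\le\ell(\ppp{nf}\cap S_{x,v})$, so \eqref{eq:gen-simplex-fmin} for $(\tilde x,\tilde v)$ gives $\liminfn n^{-1/d}\ell(\ppp{nf}\cap S_{x,v})\ge\frac{c_d}{d}\big(\inf_{S_{\tilde x,\tilde v}}f\,/\prod_i\tilde v_i\big)^{1/d}\ge\frac{c_d}{d}\big(\inf_{S_{x,v}}f_*\,/\prod_i\tilde v_i\big)^{1/d}$, using $f\ge f_*$ together with $S_{\tilde x,\tilde v}\subset S_{x,v}$, and letting $\prod_i\tilde v_i\downarrow\prod_i v_i$ gives \eqref{eq:fstarmin}. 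I expect the only real work to be the geometric bookkeeping of the nested simplices in the second paragraph — confirming that the sandwiching rational simplices can be chosen with $\prod_i\hat v_i,\prod_i\tilde v_i\to\prod_i v_i$ and with Hausdorff distance to $S_{x,v}$ tending to zero despite the coupling between the corner and the normal direction; everything else is a routine combination of monotonicity of $\ell$, the definitions of $f^*$ and $f_*$, and a countable intersection of probability one events.
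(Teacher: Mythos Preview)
Your proposal is correct and follows essentially the same approach as the paper: intersect the probability-one events of Lemma~\ref{lem:simplex-f} over rational parameters, trap a general $S_{x,v}$ inside a nearby rational simplex $S_{\hat x,\hat v}$ (with $\hat x\geqq x$ and $\hat v_i<v_i$) using an elementary inclusion criterion, apply monotonicity of $\ell$, and pass to the limit. The only cosmetic differences are that the paper verifies the inclusion $S_{x,v}\subset S_{y,q}$ by a direct algebraic computation rather than by checking vertices, and it takes the two limits $y\to x$ and $q\to v$ sequentially (obtaining $f^*$ at the first step) rather than packaging them into a single Hausdorff-distance/upper-semicontinuity argument as you do.
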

\begin{proof}
Let us prove \eqref{eq:fstarmax}; the proof of \eqref{eq:fstarmin} is similar. For each $x,v \in (0,\infty)^d$ let $\Omega_{x,v}$ denote the event that \eqref{eq:gen-simplex-fmax} holds for $x,v$. By Lemma \ref{lem:simplex-f}, $\P(\Omega_{x,v})=1$. Let us set
\[\Omega = \bigcap \Big\{ \Omega_{x,v} \, : \, x,v \in \Q^d\cap(0,\infty)^d\Big\}.\]
Being the countable intersection of probability one events, $\Omega$ has probability one. 

Let $x,v \in (0,\infty)^d$ and let $\omega \in \Omega$. Let $q \in (0,\infty)^d\cap \Q^d$ such that $q_i < v_i$ for all $i$, and let $y \in (0,\infty)^d\cap \Q^d$ such that $y \geqq x$ and
\begin{equation}\label{eq:subset-req}
\frac{1}{q_j} \geq \frac{1+\langle y-x,v\rangle}{v_j}.
\end{equation}
We claim that $S_{x,v} \subset S_{y,q}$. To see this, let $z \in S_{x,v}$ and write
\[\langle y-z,q\rangle \stackrel{\eqref{eq:subset-req}}{\leq} \frac{\langle y-z,v\rangle}{1 + \langle y-x,v\rangle} = \frac{\langle x-z,v\rangle + \langle y-x,v\rangle }{1 + \langle y-x,v\rangle} \leq 1,\]
where we used in the last step that $\langle x-z,v\rangle \leq 1$. Therefore $z \in S_{y,q}$. It follows that $\ell(\ppp{nf}\cap S_{x,v}) \leq \ell(\ppp{nf}\cap S_{y,q})$, and from \eqref{eq:gen-simplex-fmax} we deduce 
\begin{equation}\label{eq:limsupQ}
\limsupn n^{-\frac{1}{d}} \ell\left(\ppp{nf}^\omega\cap S_{x,v}\right) \leq \frac{c_d}{d} \left(\frac{\sup_{S_{y,q}} f}{q_1\cdots q_d}\right)^\frac{1}{d}.
\end{equation}
Sending $y \to x$ in such a way that $y\geqq x$ and $y \in \Q^d\cap(0,\infty)^d$ yields
\[\limsupn n^{-\frac{1}{d}} \ell\left(\ppp{nf}^\omega\cap S_{x,v}\right) \leq \frac{c_d}{d} \left(\frac{\sup_{S_{x,q}} f^*}{q_1\cdots q_d}\right)^\frac{1}{d}.\]
The result follows by noting that the limit above holds for all $q \in (0,\infty)^d\cap \Q^d$ with $q_i <  v_i$ for all $i$.
\end{proof}

We denote by $B(x,r)$ the open ball of radius $r>0$ centered at $x \in \R^d$. We now prove our main consistency result.
\begin{theorem}[Consistency]\label{thm:con}
Let $f \in \B$. Then with probability one
\begin{equation}\label{eq:const1}
\limsup_{\eps \to 0^+}\limsup_{n\to \infty} \eps^{-1} n^{-\frac{1}{d}} \ell( \ppp{nf}\cap A_\eps(x_0)) \leq \frac{c_d}{d} \left(\frac{f^*(x_0)}{\phi_{x_1}(x_0)\cdots \phi_{x_d}(x_0)}\right)^\frac{1}{d},
\end{equation}
and
\begin{equation}\label{eq:const2}
\liminf_{\eps \to 0^+}\liminf_{n\to \infty} \eps^{-1}n^{-\frac{1}{d}} \ell( \ppp{nf}\cap A_\eps(x_0) \geq \frac{c_d}{d} \left(\frac{f_*(x_0)}{\phi_{x_1}(x_0)\cdots \phi_{x_d}(x_0)}\right)^\frac{1}{d},
\end{equation}
for all $x_0 \in (0,\infty)^d$ and all $\phi \in C^2(\R^d)$ such that $\phi_{x_i}(x_0) > 0$ for all $i$, where
\begin{equation}\label{eq:Aeps}
A_\eps(x_0) := \Big\{ x \in B(x_0,\sqrt{\eps}) \, : \, x \leqq x_0 \text{ and } \phi(x) \geq \phi(x_0) - \eps\Big\}.
\end{equation}
\end{theorem}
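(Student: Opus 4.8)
The plan is to turn the ``approximate simplex'' heuristic behind the definition \eqref{eq:Aeps} into a rigorous sandwich: for all sufficiently small $\eps$, the set $A_\eps(x_0)$ will be trapped between two simplices of the form $S_{x_0,v}$ whose parameter $v$ is a small perturbation of $D\phi(x_0)/\eps$, and then \eqref{eq:const1}--\eqref{eq:const2} will follow by feeding these two simplices into Lemma~\ref{lem:simplex-f-R}. First I would fix, once and for all, the probability one event $\Omega$ on which \eqref{eq:fstarmax} and \eqref{eq:fstarmin} hold simultaneously for \emph{all} $x,v\in(0,\infty)^d$; everything that follows is deterministic on $\Omega$, and this single $\Omega$ serves for every $x_0$ and $\phi$. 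Fix $x_0\in(0,\infty)^d$ and $\phi\in C^2(\R^d)$ with $\phi_{x_i}(x_0)>0$ for all $i$; put $m:=\min_i\phi_{x_i}(x_0)>0$ and let $C$ bound $\|D^2\phi\|$ on a fixed bounded neighborhood of $x_0$, so that $|\phi(x)-\phi(x_0)-\langle D\phi(x_0),x-x_0\rangle|\le\tfrac{C}{2}|x-x_0|^2$ there.

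The heart of the argument, and the step I expect to be the main obstacle, is a bootstrap showing that the points of $A_\eps(x_0)$ are $O(\eps)$-close to $x_0$ --- much closer than the nominal radius $\sqrt{\eps}$ --- so that the quadratic Taylor remainder is of order $\eps^2$ and does \emph{not} corrupt the leading-order constant. Indeed, if $x\in A_\eps(x_0)$ then $|x-x_0|^2<\eps$, so $\langle x_0-x,D\phi(x_0)\rangle\le\phi(x_0)-\phi(x)+\tfrac{C}{2}|x-x_0|^2\le(1+\tfrac{C}{2})\eps$; since $x\leqq x_0$, each coordinate of $x_0-x$ is nonnegative and bounded by $(1+\tfrac{C}{2})\eps/m$, whence $|x-x_0|\le K\eps$ with $K:=\sqrt{d}\,(1+\tfrac{C}{2})/m$. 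Feeding this improved bound back into Taylor's theorem gives, for all small $\eps$, the inclusions
\[
A_\eps(x_0)\subset S_{x_0,\,D\phi(x_0)/(\eps(1+C_1\eps))}\qquad\text{and}\qquad S_{x_0,\,D\phi(x_0)/(\eps(1-C_1\eps))}\subset A_\eps(x_0),
\]
with $C_1:=\tfrac{CK^2}{2}$; here one checks along the way that both simplices have side lengths of order $\eps$, hence lie inside $B(x_0,\sqrt{\eps})$ (so the ball constraint in \eqref{eq:Aeps} is inactive for small $\eps$), and that $D\phi(x_0)/(\eps(1\pm C_1\eps))\in(0,\infty)^d$, as needed to invoke Lemma~\ref{lem:simplex-f-R}.

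It then remains to combine these inclusions with Lemma~\ref{lem:simplex-f-R}, the monotonicity \eqref{eq:monotonicity} of $\ell$, and a limit as $\eps\to0^+$. Writing $v^\pm(\eps):=D\phi(x_0)/(\eps(1\mp C_1\eps))$, one has $v_1^\pm(\eps)\cdots v_d^\pm(\eps)=\eps^{-d}(1\mp C_1\eps)^{-d}\,\phi_{x_1}(x_0)\cdots\phi_{x_d}(x_0)$, so after taking $d$-th roots the factor $\eps^{-d}$ exactly cancels the $\eps^{-1}$ in the statement, and the first inclusion together with \eqref{eq:fstarmax} yields, on $\Omega$,
\[
\limsup_{n\to\infty}\eps^{-1}n^{-\frac{1}{d}}\ell(\ppp{nf}\cap A_\eps(x_0))\le\frac{c_d}{d}\,(1+C_1\eps)\left(\frac{\sup_{S_{x_0,v^-(\eps)}}f^*}{\phi_{x_1}(x_0)\cdots\phi_{x_d}(x_0)}\right)^{\frac{1}{d}},
\]
and symmetrically, using the second inclusion, \eqref{eq:fstarmin}, $f_*$, an infimum, and the factor $1-C_1\eps$, a matching lower bound for the liminf. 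Finally I would let $\eps\to0^+$: the simplices $S_{x_0,v^\pm(\eps)}$ shrink to the single point $x_0$, so upper semicontinuity of $f^*$ forces $\limsup_{\eps\to0^+}\sup_{S_{x_0,v^-(\eps)}}f^*\le f^*(x_0)$, lower semicontinuity of $f_*$ forces $\liminf_{\eps\to0^+}\inf_{S_{x_0,v^+(\eps)}}f_*\ge f_*(x_0)$, and $1\pm C_1\eps\to1$. This delivers \eqref{eq:const1} and \eqref{eq:const2} for every $x_0\in(0,\infty)^d$ and every admissible $\phi$, simultaneously on the single probability one event $\Omega$.
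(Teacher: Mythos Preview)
Your proposal is correct and follows essentially the same approach as the paper: fix the probability-one event from Lemma~\ref{lem:simplex-f-R}, use a second-order Taylor expansion of $\phi$ at $x_0$ together with the constraint $x\leqq x_0$ to show that $A_\eps(x_0)$ is contained in (and contains) a simplex $S_{x_0,\,D\phi(x_0)/(\eps+O(\eps^2))}$, apply \eqref{eq:fstarmax}--\eqref{eq:fstarmin}, and send $\eps\to0^+$ using semicontinuity of $f^*,f_*$. The only cosmetic differences are that the paper obtains the $O(\eps)$ diameter bound via the integral form \eqref{eq:phi} (using $\phi_{x_i}\ge m$ on a neighborhood) rather than your Taylor-based bootstrap, and that you spell out the lower-simplex inclusion for \eqref{eq:const2} whereas the paper merely states it is ``very similar.''
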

\begin{proof}
Let $\Omega$ be the event that \eqref{eq:fstarmax} and \eqref{eq:fstarmin} hold for all $x,v \in (0,\infty)^d$. By Lemma \ref{lem:simplex-f-R}, $\Omega$ has probability one. Let $x_0 \in (0,\infty)^d$, $\eps>0$ and let $\phi \in C^2(\Omega)$ such that $\phi_{x_i}(x_0) > 0$ for all $i$. Fix $\omega \in \Omega$.

We will give the proof of \eqref{eq:const1}; the proof of \eqref{eq:const2} is very similar. Since $\phi_{x_i}(x_0)>0$, there exists $m>0$ such that $\phi_{x_i}(x) \geq m$ for all $x \in A_\eps$, $i \in \{1,\dots,d\}$, and all $\eps>0$ sufficiently small. It follows that for any $x \in A_\eps$ 
\begin{equation}\label{eq:phi}
\phi(x_0) - \phi(x) = \int_0^1 \langle D\phi(x + t(x_0 - x)),x_0 - x\rangle \, dt \geq m \sum_{i=1}^d (x_{0,i} - x_i) \geq \frac{m|x_0 - x|}{\sqrt{d}}.
\end{equation}
We therefore deduce
\begin{equation}\label{eq:close}
|x_0 - x| \leq \frac{\sqrt{d}\eps}{m} \ \ \text{for all } x \in A_\eps.
\end{equation}
Since $\phi \in C^2$, there exists a constant $C>0$ such that
\begin{equation}\label{eq:phi2}
\phi(x) - \phi(x_0) \leq \langle D\phi(x_0),x-x_0\rangle  + C \eps^2
\end{equation} 
for all $x \in A_\eps$ and $\eps>0$ sufficiently small. It follows that
\[A_\eps(x_0) \subset \Big\{x \in \R^d \, : \, x \leqq x_0 \text{ and } \langle D\phi(x_0),x_0 - x\rangle \leq \eps + C\eps^2\Big\}.\]
Setting $q_\eps =  D\phi(x_0)/(\eps + C \eps^2)$ we have $A_\eps(x_0) \subset S_{x_0,q_\eps}$, and hence 
\[\ell(\ppp{nf}^\omega \cap A_\eps(x_0)) \leq \ell(\ppp{nf}^\omega\cap S_{x_0,q_\eps}).\]
 By  \eqref{eq:fstarmax}
\[\limsup_{n\to \infty} n^{-\frac{1}{d}} \ell( \ppp{nf}^\omega\cap A_\eps(x_0)) \leq \frac{c_d}{d} \left(\frac{\sup_{S_{x_0,q_\eps}} f^*}{\phi_{x_1}(x_0)\cdots \phi_{x_d}(x_0)}\right)^\frac{1}{d}(\eps + C\eps^2). \]
Sending $\eps\to 0^+$ completes the proof.
\end{proof}

\section{Proof of main result}
\label{sec:proof}

We now have the proof of Theorem \ref{thm:cont}.
\begin{proof}
Let $\Omega$ denote the event that the conclusions of Theorem \ref{thm:holder2} and Theorem \ref{thm:con} hold, and $U_n \equiv 0$ on $\partial \R^d_+$ for all $n$. Then $\P(\Omega)=1$. Let us fix a realization $\omega \in \Omega$. The remainder of the proof is split into several steps.

1. We first use stability (Theorem \ref{thm:holder2}) to obtain a compactness result.  Since $U^\omega_n(0)=0$ for all $n$, it follows from  Theorem \ref{thm:holder2} that the sequence $\big\{n^{-\frac{1}{d}}U_{n}^\omega(x)\big\}_{n\in \N}$ is bounded for all $x \in [0,\infty)^d$. By a diagonal argument, there exists a subsequence $\{U_{n_k}^\omega\}_{k\in \N}$ such that for all $x\in [0,\infty)^d \cap \Q^d$, $\big\{n_k^{-\frac{1}{d}} U_{n_k}^\omega(x) \big\}_{k \in \N}$ is a convergent sequence, whose limit we denote by $U^\omega(x)$.  By Theorem  \ref{thm:holder2}
\begin{equation}\label{eq:vholder}
|U^\omega(x) - U^\omega(y)| = \lim_{n_k\to \infty} n_k^{-\frac{1}{d}}|U^\omega_{n_k}(x) - U^\omega_{n_k}(y)| \leq c_ddM^\frac{d-1}{d}\|f^*\|_{L^\infty([0,M]^d)}^\frac{1}{d}|x-y|^\frac{1}{d}
\end{equation}
for all $x,y \in \Q^d\cap [0,\infty)^d$, where $M=\max\{x_1,y_1,x_2,y_2,\dots,x_d,y_d\}$.  Hence, we can extend $U^\omega$ uniquely to a function $U^\omega \in C([0,\infty)^d)$ such that for every $M>0$, $U^\omega \in C^{0,\frac{1}{d}}([0,M]^d)$ and
  \begin{equation}\label{eq:holder-norm}
[U^\omega]_{\frac{1}{d};[0,M]^d} \leq c_d d M^\frac{d-1}{d} \|f^*\|_{L^\infty([0,M]^d)}^\frac{1}{d}.
  \end{equation}
Furthermore, $U^\omega$ is nondecreasing and $U^\omega \equiv 0$ on $\partial \R^d_+$.

We claim that $n_k^{-\frac{1}{d}} U_{n_k}^\omega \longrightarrow U^\omega$ locally uniformly on $[0,\infty)^d$. The proof of this is similar to \cite[Theorem 1]{calder2014}. We include it here for completeness. Fix $M>0$ and let $\eps>0$. Let $T \in \N$ and for any multi-index $\alpha \in \Z^d$, let $x_\alpha = \alpha/T$. Let $I$ be the set of multi-indices $\alpha \in \Z^d$ for which $x_\alpha \in [0,M]^d$. Since $U^\omega$ is continuous on $[0,M]^d$, we can choose $T$ large enough so that
\begin{equation}\label{eq:grid-cont}
U^\omega(x_{\alpha + \vb{1}}) - U^\omega(x_\alpha) < \eps \ \ \text{ for all } \alpha \in I.
\end{equation}
Since $I$ is a finite set and $x_\alpha \in \Q^d$ for all $\alpha \in I$, we deduce
\begin{equation}\label{eq:grid-lim}
\lim_{k\to\infty} \max_{\alpha \in I} \left| n_k^{-\frac{1}{d}} U_{n_k}^\omega(x_\alpha) - U^\omega(x_\alpha) \right| = 0.
\end{equation}
Let $y \in [0,M]^d$ and let $\alpha \in I$ such that $x_\alpha \leqq y \leqq x_{\alpha+\vb{1}}$. Since $U^\omega_n$ and $U^\omega$ are nondecreasing  we have
\[n^{-\frac{1}{d}} U^\omega_n(y) - U^\omega(y) \leq n^{-\frac{1}{d}} U^\omega_n(x_{\alpha+\vb{1}}) - U^\omega(x_\alpha) \stackrel{\eqref{eq:grid-cont}}{<} n^{-\frac{1}{d}} U^\omega_n(x_{\alpha+\vb{1}}) - U^\omega(x_{\alpha+\vb{1}}) + \eps.\]
Similarly, we deduce
\[n^{-\frac{1}{d}} U^\omega_n(y) - U^\omega(y) \geq n^{-\frac{1}{d}} U^\omega_n(x_{\alpha}) - U^\omega(x_{\alpha+\vb{1}}) \stackrel{\eqref{eq:grid-cont}}{>} n^{-\frac{1}{d}} U^\omega_n(x_{\alpha}) - U^\omega(x_{\alpha}) - \eps.\]
Combining these inequalities yields
\begin{equation}\label{eq:Linfty-bound}
\|n^{-\frac{1}{d}} U^\omega_n - U^\omega \|_{L^\infty([0,M]^d)} < \max_{\alpha \in I} \left| n^{-\frac{1}{d}} U_{n}(x_\alpha) - U^\omega(x_\alpha) \right| + \eps.
\end{equation}
Invoking \eqref{eq:grid-lim} we see that
\[ \limsup_{k\to\infty} \|n_k^{-\frac{1}{d}} U_{n_k}^\omega - U^\omega \|_{L^\infty([0,M]^d)}  < \eps.\]
Sending $\eps\to0$ establishes the claim.

2. We now show that $U^\omega$ is a viscosity subsolution of (P). For simplicity, let us set 
\[V_k= n_k^{-\frac{1}{d}}U_{n_k}^\omega \ \ \text{ and } \ \ V = U^\omega.\] 
Fix $M>0$ and let $x_0 \in (0,M)^d$. Let $\phi\in C^2(\R^d)$ such that $V - \phi$ has a local maximum at $x_0$. Since $V$ is nondecreasing, $\phi_{x_i}(x_0) \geq 0$ for all $i$. If $\phi_{x_i}(x_0)=0$ for some $i$, then the subsolution property is trivially satisfied. Hence, we may assume that $\phi_{x_i}(x_0) > 0$ for all $i$. Without loss of generality, we may also assume that $V - \phi$ has a strict maximum at $x_0$, relative to the set $[0,M]^d$. Then there exists a sequence $\{x_k\}_{k\in \N}$ in $[0,M]^d$ converging to $x_0$ such that $V_k - \phi$ has a maximum at $x_k$, relative to $[0,M]^d$. 

Let $r,m>0$ such that $B(x_0,r) \subset (0,M)^d$ and $\phi_{x_i}(x) > m$ for all $x \in B(x_0,r)$ and all $i$. Let $\eps\in (0,1)$ such that
\begin{equation}\label{eq:epscond}
\eps < \min\left\{\frac{m^2}{4d},r^2\right\}.
\end{equation}
Since $V_k \to V$ uniformly on $[0,M]^d$ and $x_k \to x_0$, there exists $K>0$ such that for all $k>K$
\begin{equation}\label{eq:kcond}
\phi(x_0) - \phi(x_k) + V_k(x_k) - V_k(x_0) < \eps^2.
\end{equation}
Let
\begin{equation}\label{eq:Ake}
\A_{k,\eps}=\big\{ z \in [0,x_0] \, : \, V_k(z) \geq V_k(x_0) - \eps\big\}.
\end{equation}

We claim that $\A_{k,\eps} \subset B(x_0,\sqrt{\eps})$ for all $k>K$. To see this, let $z \in B(x_0,r)$ such that $z \leqq x_0$ and note that
\begin{equation}\label{eq:dec}
\phi(x_0) - \phi(z) = \int_0^1 \left\langle D\phi(z + t(x_0-z)),x_0-z\right\rangle\, dt \geq m\sum_{i=1}^d (x_{0,i} - z_i) \geq \frac{m|x_0-z|}{\sqrt{d}}.
\end{equation}
Since $V_k - \phi $ has a maximum at $x_k$,
\begin{equation}\label{eq:Vkmax}
V_k(z) - \phi(z) \leq V_k(x_k) - \phi(x_k) \ \ \text{ for } z \in [0,M]^d.
\end{equation}
Combining this with \eqref{eq:epscond}, \eqref{eq:kcond} and \eqref{eq:dec} we deduce
\begin{align*}
V_k(z) &\leq V_k(x_k) + \phi(z) - \phi(x_k) \\
&= V_k(x_0) + \phi(z) - \phi(x_0) + \phi(x_0) - \phi(x_k)+ V_k(x_k) - V_k(x_0)\\
&< V_k(x_0)  - \frac{m}{\sqrt{d}}\sqrt{\eps} + \eps^2\\
&< V_k(x_0) - \eps
\end{align*}
for all $z \in \partial B(x_0,\sqrt{\eps})$ with $z \leqq x_0$, and $k>K$. Since $V_k$ is nondecreasing, $\A_{k,\eps} \subset B(x_0,\sqrt{\eps})$, establishing the claim.

We now claim that
\begin{equation}\label{eq:Ake-chain}
\eps n_k^\frac{1}{d} \leq \ell(\ppp{n_kf}^\omega \cap \A_{k,\eps}).
\end{equation}
To see this, since $U_{n}$ is integer valued, we can write $\A_{k,\eps}$ as
\begin{equation}\label{eq:Ake-int}
\A_{k,\eps}=\big\{ z \in [0,x_0] \, : \, U_{n_k}^\omega(z) \geq U_{n_k}^\omega (x_0) - \lfloor \eps n_k^\frac{1}{d}\rfloor\big\},
\end{equation}
where $\lfloor t\rfloor$ is the largest integer less than or equal to $t \in \R$.
Since $\A_{k,\eps} \subset B(x_0,r)\subset (0,\infty)^d$ and $U^\omega_{n_k}(y) = 0$ for $y \in \partial \R^d_+$, we must have that $U^\omega_{n_k}(x_0) - \lfloor \eps n_k^\frac{1}{d}\rfloor\geq 1$. Let $L = U_{n_k}^\omega(x_0)$ and let $X^\omega_1\leqq \dots\leqq X^\omega_{L}$ be a chain in $\ppp{n_kf}^\omega\cap [0,x_0]$ of length $L$. For any $q \geq L-\lfloor \eps n_k^\frac{1}{d}\rfloor$, $U_{n_k}^\omega(X^\omega_q) \geq q \geq  L - \lfloor \eps n_k^\frac{1}{d}\rfloor$, and hence $X^\omega_q \in \A_{k,\eps}$. Hence the chain $X^\omega_q, \dots,X^\omega_L$ belongs to $\A_{k,\eps}$ for $q=L-\lfloor \eps n_k^\frac{1}{d} \rfloor$ and
\[\eps n_k^\frac{1}{d} \leq \lfloor \eps n_k^\frac{1}{d}\rfloor + 1 = L-q + 1 \leq \ell(\ppp{n_kf}^\omega\cap \A_{k,\eps}),\]
which establishes the claim.

By \eqref{eq:kcond} and \eqref{eq:Vkmax}
\[V_k(z) - V_k(x_0) \leq \phi(z) - \phi(x_0) + \eps^2\]
for all $k>K$ and $z \in [0,M]^d$. Since $\A_{k,\eps}\subset B(x_0,\sqrt{\eps})$ it follows that
\begin{equation}\label{eq:vksubphi}
\A_{k,\eps} \subset \big\{ z \in B(x_0,\sqrt{\eps}) \, : \, x \leqq x_0 \ \text{ and } \ \phi(z) \geq \phi(x_0) -\eps-\eps^2)\big\}=:A_{\eps+\eps^2}(x_0)
\end{equation}
for $k>K$, where $A_{\eps+\eps^2}(x_0)$ is defined as in Theorem \ref{thm:con}. Invoking \eqref{eq:Ake-chain} and the monotonicity of $\ell$ we have
\[\eps \leq\limsup_{k\to \infty}n_k^{-\frac{1}{d}}\ell(\ppp{n_kf}^\omega\cap \A_{k,\eps}) \leq \limsup_{n\to \infty} n^{-\frac{1}{d}}\ell\left( \ppp{n f}^\omega \cap A_{\eps+\eps^2}(x_0)\right).\]
Since Theorem \ref{thm:con} holds for all $\omega \in \Omega$ and $\phi_{x_i}(x_0)>0$, we deduce
\[1 \leq \limsup_{\eps \to 0}\limsup_{n\to \infty} (\eps+\eps^2)^{-1} n^{-\frac{1}{d}}\ell\left( \ppp{n f}^\omega \cap A_{\eps+\eps^2}(x_0)\right) \leq \frac{c_d}{d} \left(\frac{f^*(x_0)}{\phi_{x_1}(x_0)\cdots \phi_{x_d}(x_0)}\right)^\frac{1}{d},\]
and hence
\[\phi_{x_1}(x_0) \cdots \phi_{x_d}(x_0) \leq \frac{c_d^d}{d^d}f^*(x_0).\]

3. We now show that $V$ is a viscosity supersolution of (P). Let $\phi\in C^2(\R^d)$ such that $V - \phi$ has a local minimum at $x_0$. Without loss of generality, we may assume that $x_0$ is a minimum of $V-\phi$ relative to the set $[0,M]^d$. Since $V$ is nondecreasing, $\phi_{x_i}(x_0) \geq 0$ for all $i$.  For $\lambda>0$, set
\[\phi^\lambda(x) = \phi(x) + \lambda(x_1+\cdots +x_d).\]
Then $V-\phi^\lambda$ has a strict minimum at $x_0$, relative to the set $[0,x_0]$, and $\phi^\lambda_{x_i}(x_0)>0$ for all $i$. Therefore, there exists a sequence $\{x_k\}_{k\in \N}$ in $[0,x_0]$ converging to $x_0$ such that $V_k - \phi^\lambda$ has a minimum at $x_k$, relative to $[0,x_0]$.  

Let $r,m>0$ such that $B(x_0,r) \subset (0,M)^d$ and $\phi^\lambda_{x_i}(x) > m$  for all $x \in B(x_0,r)$ and all $i$.
Let $\eps>0$ and let $X^\omega_1,\dots,X^\omega_j$ be a chain in $\ppp{n_k f}^\omega \cap \A_{k,\eps}$. Then by the definition of $\A_{k,\eps}$ \eqref{eq:Ake-int}
\[U^\omega_{n_k}(X_1^\omega) \geq U_{n_k}^\omega(x_0) - \lfloor \eps n_k^\frac{1}{d}\rfloor,\]
and therefore
\[U_{n_k}^\omega(x_0) \geq U_{n_k}^\omega(X_j^\omega) \geq U_{n_k}^\omega(X_1^\omega) + j-1 \geq U^\omega_{n_k}(x_0) - \lfloor \eps n_k^{\frac{1}{d}}\rfloor + j-1.\]
Hence $j \leq  \eps n_k^\frac{1}{d} + 1$ and  therefore
\begin{equation}\label{eq:LAe}
\ell(\ppp{n_kf}^\omega(\A_{k,\eps})) \leq \eps n_k^\frac{1}{d} + 1.
\end{equation}

Since $V_k - \phi^\lambda$ has a minimum at $x_k$
\[V_k(z) - V_k(x_0) \geq \phi^\lambda(z) - \phi^\lambda(x_0) + \phi^\lambda(x_0) - \phi^\lambda(x_k) + V_k(x_k) - V_k(x_0),\]
for $z \in [0,x_0]$. Since $x_k \to x_0$ and $V_k \to V$ locally uniformly, we can choose $k$ larger, if necessary, so that
\[V_k(z) - V_k(x_0) \geq \phi^\lambda(z) - \phi^\lambda(x_0) - \eps^2 \ \ \text{ for all } z \in [0,x_0].\]
It follows that
\[\A_{k,\eps} \supset \{z \in B(x_0,\sqrt{\eps})\, : \, x \leqq x_0 \ \text{and} \  \phi^\lambda(z) \geq \phi^\lambda(x_0) + \eps^2 - \eps\}=:A_{\eps-\eps^2}(x_0),\]
where $A_{\eps-\eps^2}(x_0)$ is as defined in Theorem \ref{thm:con}. Invoking \eqref{eq:LAe} and the monotonicity of $\ell$
\[\eps n_k^\frac{1}{d} + 1 \geq \ell\left( \ppp{n_k f}^\omega \cap \A_{k,\eps} \right) \geq \ell(\ppp{n_k f}^\omega \cap A_{\eps - \eps^2}(x_0) ),\]
and therefore
\[\eps \geq \liminf_{n\to \infty} n^{-\frac{1}{d}} \ell(\ppp{n f}^\omega \cap A_{\eps - \eps^2}(x_0) ).\]
Since Theorem \ref{thm:con} holds for all $\omega \in \Omega$ and $\phi^\lambda_{x_i}(x_0)>0$
\[1 \geq \liminf_{\eps\to 0}\liminf_{n \to \infty}(\eps-\eps^2)^{-1}n^{-\frac{1}{d}}\ell(\ppp{n f}^\omega \cap A_{\eps-\eps^2}(x_0)) \geq \frac{c_d}{d} \left(\frac{f_*(x_0)}{\phi^\lambda_{x_1}(x_0) \cdots \phi^\lambda_{x_d}(x_0)}\right)^\frac{1}{d}.\]
Thus, we arrive at
\[(\phi_{x_1}(x_0)+\lambda) \cdots (\phi_{x_d}(x_0)+\lambda) \geq \frac{c_d^d}{d^d} f_*(x_0).\]
Since $\lambda>0$ was arbitrary, $V$ is a viscosity supersolution of (P).

4. By uniqueness of nondecreasing viscosity solutions of (P), we have $U^\omega= u$.  Since we can apply the same argument to any subsequence of $\{n^{-\frac{1}{d}}U^\omega_n\}_n$, and extract a further subsequence converging locally uniformly to $u$, we find that $n^{-\frac{1}{d}}U^\omega_n \longrightarrow u$ locally uniformly on $[0,\infty)^d$ for all $\omega \in \Omega$, where $\Omega$ has probability one. 
\end{proof}


\appendix
\section{Complete convergence for longest chain problem}
We sketch the proof of Theorem \ref{thm:complete}.
\begin{proof}
Let $\eps>0$. Let $X_1,X_2,X_3,\dots$ be a sequence of independent and uniformly distributed random variables on $[0,1]^d$ and let $\ell_n = \ell(\{X_1,\dots,X_n\})$ be the length of a longest chain. Let $N$ denote the cardinality of $\ppp{t}\cap[0,1]^d$. Conditioned on $N=n$, $L_t$ and $\ell_n$ have the same distribution, so by the usual tail bounds on Poisson random variables we deduce
\begin{equation}\label{eq:poissonconcentration}
P(L_t \geq \lambda)\leq P\left(\ell_{\lfloor t + t^\frac{3}{4}\rfloor}\geq \lambda\right) + \exp\left(\frac{-t^\frac{1}{2}}{2}\right).
\end{equation}
By \cite{bollobas1988} there exist constants $0<C_1<C_2$ such that
\begin{equation}\label{eq:bounds}
C_1n^\frac{1}{d}\leq  \E[\ell_n] \leq C_2n^\frac{1}{d}
\end{equation}
for all $n\geq 1$, and 
\begin{equation}\label{eq:mean}
\lim_{n\to \infty} n^{-\frac{1}{d}} \E[\ell_n] = c_d>0.
\end{equation}
Since the longest chain function $\ell$ is a \emph{configuration function} (see \cite[Definition 7.1.7]{talagrand1995concentration}), we have
\begin{equation}\label{eq:concentration}
P\left(\ell_n \leq \M[\ell_n]-\lambda\right),P\left(\ell_n \geq \M[\ell_n]+\lambda\right) \leq 2\exp\left(-\frac{\lambda^2}{4(\M[\ell_n] + \lambda)} \right),
\end{equation}
for any $\lambda>0$, where $\M(\ell_n)$ denotes any median of $\ell_n$. A short computation involving \eqref{eq:bounds} and  \eqref{eq:concentration} yields
\begin{equation}\label{eq:med}
|\E[\ell_n]  - \M(\ell_n)| \leq C\log(n)n^\frac{1}{2d}.
\end{equation}
and we find that $n^{-\frac{1}{d}}\M[\ell_n] \to c_d$ as $n\to \infty$. Fix $T$ large enough so that 
\begin{equation}\label{eq:medeps}
|\M[\ell_n]  - c_dn^\frac{1}{d}| \leq \frac{\eps}{2} n^\frac{1}{d} \ \ \text{ for all } n > T.
\end{equation}
It is well known (see, e.g., \cite{steele1997probability}) that
\[|\E[\ell_n] - \E[\ell_k]| \leq C|n-k|^\frac{1}{d}.\]
Combining this with \eqref{eq:med}  we deduce
\begin{equation}\label{eq:med_smooth}
|\M[\ell_n] - \M[\ell_k]| \leq C\left(\log(k)k^\frac{1}{2d} + \log(n) n^\frac{1}{2d} + |n-k|^\frac{1}{d} \right).
\end{equation} 
Recalling \eqref{eq:poissonconcentration}, \eqref{eq:concentration} and \eqref{eq:medeps} we have 
\begin{equation}\label{eq:Ltcon}
P(L_t \geq c_dt^\frac{1}{d} + \eps t^\frac{1}{d})\leq 2\exp\left(-C\eps^2 t^\frac{1}{d}\right)+  \exp\left(\frac{-t^\frac{1}{2}}{2}\right).
\end{equation}
for all $t>T$ sufficiently large. The other inequality is similar, and the result follows from \eqref{eq:seqreq}.
\end{proof}

\section*{Acknowledgments}
The author is grateful to Lawrence C.~Evans and Fraydoun Rezakhanlou for valuable discussions about this work.

\end{document}